\documentclass[12pt]{article}
\usepackage{graphicx}
\usepackage{amsmath,amsthm,amssymb,enumerate}
\usepackage{euscript,mathrsfs}
\usepackage{color}
\usepackage[left=2cm,right=2cm,top=3.5cm,bottom=3.5cm]{geometry}
\usepackage{color}
\usepackage{esint}
\usepackage[framemethod=tikz]{mdframed}
\allowdisplaybreaks

\usepackage{soul}

\catcode`\@=11 \@addtoreset{equation}{section}

\catcode`\@=12

\newtheorem{Theorem}{Theorem}[section]
\newtheorem{Proposition}[Theorem]{Proposition}
\newtheorem{Lemma}[Theorem]{Lemma}
\newtheorem{Corollary}[Theorem]{Corollary}

\theoremstyle{definition}
\newtheorem{Definition}[Theorem]{Definition}

\newtheorem{Remark}[Theorem]{Remark}

\newcommand{\bTheorem}[1]{
	\begin{Theorem} \label{T#1} }
	\newcommand{\eT}{\end{Theorem}}

\newcommand{\bProposition}[1]{
	\begin{Proposition} \label{P#1}}
	\newcommand{\eP}{\end{Proposition}}

\newcommand{\bLemma}[1]{
	\begin{Lemma} \label{L#1} }
	\newcommand{\eL}{\end{Lemma}}

\newcommand{\bCorollary}[1]{
	\begin{Corollary} \label{C#1} }
	\newcommand{\eC}{\end{Corollary}}

\newcommand{\bRemark}[1]{
	\begin{Remark} \label{R#1} }
	\newcommand{\eR}{\end{Remark}}

\newcommand{\bDefinition}[1]{
	\begin{Definition} \label{D#1} }
	\newcommand{\eD}{\end{Definition}}

\newcommand{\Del}{\Delta_x}

\newcommand{\wmT}{\widetilde{\mathcal{T}}}

\newcommand{\data}{{\rm data}}

\newcommand{\bfphi}{\boldsymbol{\varphi}}

\newcommand{\bFormula}[1]{
	\begin{equation} \label{#1}}
	\newcommand{\eF}{\end{equation}}

\newcommand{\avintO}[1]{\fint_{\Omega} #1 \dx}
\newcommand{\vun}{\vu_n}

\newcommand{\Ov}[1]{\overline{#1}}

\newcommand{\aleq}{\stackrel{<}{\sim}}

\newcommand{\vr}{\varrho}

\newcommand{\tvu}{{\tilde \vu}}

\newcommand{\vt}{\vartheta}
\newcommand{\vu}{\vc{u}}

\newcommand{\vc}[1]{{\bf #1}}

\newcommand{\Div}{{\rm div}_x}
\newcommand{\Grad}{\nabla_x}

\newcommand{\dx}{\,{\rm d} {x}}

\newcommand{\dt}{\,{\rm d} t }

\newcommand{\intO}[1]{\int_{\Omega} #1 \ \dx}

\newcommand{\D}{{\rm d}}

\newcommand{\vtB}{\vt_B}

\newcommand{\br}{ \nonumber \\ }

\def\softd{{\leavevmode\setbox1=\hbox{d}%
		\hbox to 1.05\wd1{d\kern-0.4ex{\char039}\hss}}}
\definecolor{Cgrey}{rgb}{0.85,0.85,0.85}
\definecolor{Cblue}{rgb}{0.50,0.85,0.85}
\definecolor{Cred}{rgb}{1,0,0}
\definecolor{fancy}{rgb}{0.10,0.85,0.10}

\newcommand\Cbox[2]{%
	\newbox\contentbox%
	\newbox\bkgdbox%
	\setbox\contentbox\hbox to \hsize{%
		\vtop{
			\kern\columnsep
			\hbox to \hsize{%
				\kern\columnsep%
				\advance\hsize by -2\columnsep%
				\setlength{\textwidth}{\hsize}%
				\vbox{
					\parskip=\baselineskip
					\parindent=0bp
					#2
				}%
				\kern\columnsep%
			}%
			\kern\columnsep%
		}%
	}%
	\setbox\bkgdbox\vbox{
		\color{#1}
		\hrule width  \wd\contentbox %
		height \ht\contentbox %
		depth  \dp\contentbox
		\color{black}
	}%
	\wd\bkgdbox=0bp%
	\vbox{\hbox to \hsize{\box\bkgdbox\box\contentbox}}%
	\vskip\baselineskip%
}

\mdfdefinestyle{MyFrame}{%
	linecolor=black,
	outerlinewidth=1pt,
	roundcorner=5pt,
	innertopmargin=\baselineskip,
	innerbottommargin=\baselineskip,
	innerrightmargin=10pt,
	innerleftmargin=10pt,
	backgroundcolor=white!20!white}


\newenvironment{giuliorev}{\color{magenta}}{\color{black}}
\newcommand{\III}{\begin{giuliorev}}
\newcommand{\EEE}{\end{giuliorev}}
 
\definecolor{rosso}{rgb}{0.85,0,0}



\begin{document}


\title{The Oberbeck--Boussinesq approximation and Rayleigh--B\' enard convection
revisited}

\author{Eduard Feireisl$^1$
		\thanks{The work of E.F. was partially supported by the
			Czech Sciences Foundation (GA\v CR), Grant Agreement
			24-11034S. The Institute of Mathematics of the Academy of Sciences of
			the Czech Republic is supported by RVO:67985840. } 
		 \and Elisabetta Rocca$^2$ \and Giulio Schimperna$^2$ \thanks{The work of E.R. and G.S. has been partially supported by the MIUR-PRIN Grant 2020F3NCPX
``Mathematics for industry 4.0 (Math4I4)''. The present paper also benefits from the support of
the GNAMPA (Gruppo Nazionale per l'Analisi Matematica, la Probabilit\`a e le loro Applicazioni)
of INdAM (Istituto Nazionale di Alta Matematica).}}

\date{}

\maketitle

\medskip

\centerline{$^1$Institute of Mathematics of the Academy of Sciences of the Czech Republic}

\centerline{\v Zitn\' a 25, CZ-115 67 Praha 1, Czech Republic}

\medskip

\centerline{$^2$Dipartimento di Matematica, Universit\`a degli studi di Pavia and IMATI-C.N.R.}

\centerline{via Ferrata, 5-27100 Pavia, Italy}

\begin{abstract}
	
We consider the Oberbeck--Boussinesq approximation driven by an inhomogeneous 
temperature distribution on the boundary of a bounded fluid domain. The relevant boundary conditions are perturbed by a non--local term arising in the incompressible limit of the Navier--Stokes--Fourier system. The long time behaviour of the resulting initial/boundary value problem is investigated.

\end{abstract}


{\bf Keywords:} Oberbeck--Boussinesq approximation, Rayleigh-B\' enard convection, long--time behaviour, non--local boundary conditions


\section{Introduction}
\label{i}

The \emph{Oberbeck--Boussinesq (OB) system} is frequently used 
as a suitable approximation of the motion of an incompressible fluid 
driven by thermal convection, see e.g. the surveys of 
Bodenschatz, Pesch, and Ahlers \cite{BoPeAh}, 
Ecke and Shishkina \cite{EckShi}. The system of equations can be rigorously derived as a singular low Mach/moderate stratification limit of the 
primitive Navier--Stokes--Fourier system describing the motion of a compressible 
heat conducting fluid, cf. \cite{BelFeiOsch}, \cite{FanFei2023}, \cite[Chapter 5]{FeNo6A}. In the Rayleigh--B\' enard convection problem, the fluid motion is driven by the mutual interaction of the gravitational force and the thermal gradient enforced by the Dirichlet boundary conditions for the temperature. 
The singular limit process then produces a rather unexpected non--local 
boundary term so far completely ignored in the literature, see \cite{BelFeiOsch} for the well--prepared and \cite{FanFei2023} for the ill--prepared data. 
Accordingly, the resulting limit problem consists of the Navier--Stokes system 
for the fluid velocity $\vu = \vu(t,x)$, 
\begin{align}
	\Div \vu = 0, \label{i2} \\
	\partial_t \vu + \Div (\vu \otimes \vu) + \Grad \Pi &= \mu \Del \vu - \Theta \Grad G,\ \mu > 0, \label{i1}
\end{align} 
coupled with the equation for the temperature deviation $\Theta = \Theta(t,x)$,
\begin{equation} \label{i3} 
\partial_t \Theta + \Div (\Theta \vu) + a \Div (G \vu) = \kappa \Del \Theta,\ 
a \in R,\ \kappa > 0 
\end{equation}
for $t > 0$, $x \in \Omega$, bounded and sufficiently regular subset of $R^d$, $d=2,3$. The problem is supplemented 
with the \emph{Rayleigh--B\' enard (RB) type boundary conditions}:
\begin{align} 
	\vu|_{\partial \Omega} = 0,  \label{i4} \\
	\Theta|_{\partial \Omega} = \vtB  - \alpha  \avintO{ \Theta },\ \alpha > 0,
	  \label{i5}
	\end{align}
where $	\fint \equiv \frac{1}{|\Omega|} \int_\Omega$. The function 
$\vtB = \vtB(x)$ is the boundary distribution of the original temperature deviation, while the integral term is an inevitable product of the singular limit process. The value 
of the parameter $\alpha$ depends on the reference state of the system and will be discussed later. As both $\Theta$ and $\vtB$ represent the deviations from a 
background temperature, they need not be non--negative.

The function $G = G(x)$ represents a gravitational potential. In addition, we suppose
\begin{equation} \label{i6}
	\Del G = 0 \ \mbox{in}\ \Omega,\ \intO{ G } = 0.
\end{equation}	
Replacing $\Theta \approx \Theta + aG$, $\vtB \approx \vtB + aG$, we may rewrite the above problem in the form:
\begin{align}
	\partial_t \vu + \Div (\vu \otimes \vu) + \Grad \Pi &= \mu \Del \vu - \Theta \Grad G, \label{i7}\\
	\Div \vu &= 0, \label{i8}\\
	\partial_t \Theta + \Div (\Theta \vu)&= \kappa \Del \Theta, \label{i9}
\end{align}
with the boundary conditions 
\begin{align} 
	\vu|_{\partial \Omega} &= 0, \label{i10} \\
	\Theta|_{\partial \Omega} &= \vtB  - \alpha  \avintO{ \Theta }. \label{i11}
\end{align}

The OB system \eqref{i7}--\eqref{i9}, with the standard Dirichlet boundary 
conditions \eqref{i10} and \eqref{i11} with $\alpha = 0$, represents an iconic 
model of turbulent fluid motion examined in a large number of studies. In particular, Foias, Manley, and Temam \cite{FoMaTe} established the existence of 
global attractors and estimates on their fractal dimension. The problem has been revisited by Cabral, Rosa, and Temam \cite{CaRoTe}, where more complex geometries of the fluid domain as well as general driving forces were included. Wang \cite{WangX} considered the same problem in the large Prandtl number regime, where 
the fluid velocity becomes regular for long times. Similar ideas were exploited 
by Birnir and Svanstedt \cite{BirSva}. Notice that here $\nabla G$ could also include a centrifugal force (cf., e.g., \cite{EckShi}). Such a model, however, cannot be obtained by the limit passage performed in \cite{BelFeiOsch}. 

The \emph{existence} theory for the problem \eqref{i7}--\eqref{i11} was developed in \cite{AbbFei22}. The result inherits the well known difficulties related to 
the Navier--Stokes system. Regular solutions exist on a maximal time interval 
$[0, T_{\rm max})$, where $T_{\rm max} = \infty$ if $d=2$. The problem admits global in time weak solutions if $d =2,3$. Our goal is to study the long--time 
behaviour of the weak solutions for $t \to \infty$. In particular, we address the following issues:
\begin{itemize}
	\item Dissipativity and the existence of a bounded absorbing sets.
	\item Asymptotic compactness of global--in--time trajectories.
	\item Existence of stationary statistical solutions and convergence of ergodic averages.
	\item Convergence to equilibria.
\end{itemize}

In comparison with the existing literature, the principal difficulty is the presence of the integral mean in the boundary condition \eqref{i11}. In contrast 
with \cite{FoMaTe}, \cite{WangX}, the heat equation \eqref{i9} accompanied 
with the non--local boundary condition \eqref{i11} does not admit any obvious 
maximum/minimum principle. Establishing uniform bounds on $\Theta$ is therefore the heart of the present paper.

The paper is organized as follows. In Section \ref{w}, we collect the preliminary material concerning the existence and basic properties of the weak solutions to the OB system. Section \ref{U} contains the proof of uniform bounds on the temperature -- a variant of the maximum/minimum principle for parabolic equations with non--local boundary conditions that may be of independent interest. 
In Section \ref{B}, we show Levinson dissipativity or the existence of a bounded absorbing set independent of the size of specific initial data, while, in Section \ref{B2} we prove the existence of statistical solutions. Section \ref{C} addresses the convergence of global in time solutions to equilibria. Of course, the result is conditioned by smallness of the associated equilibrium solution and not expected to hold for general (large) data. In particular, we recover a ``Boussinesq analogy'' of the convergence result of Abbatiello, Bul\' \i \v cek, Kaplick\' y \cite{AbbBulKap} for small driving forces.  
   
\section{Preliminary material, weak formulation}
\label{w}

We assume the following general hypotheses on the data (cf.~\cite{AbbFei22})
\begin{align}
\label{d1}
&\vu_0 \in  L^2(\Omega; R^d), \quad \Div \vu_0 = 0,\\
\label{d2}
 &\Theta_0\in W^{2,q}(\Omega)\cap L^\infty(\Omega), \quad \vtB\in W^{2,q}(\partial\Omega),\quad (\Theta_0)_{|\partial\Omega}=\vtB-\alpha\avintO{ \Theta_0 },\\
 \label{d3}
 &G\in W^{1,\infty}(\Omega), \quad 	\Del G = 0 \ \mbox{in}\ \Omega,\ \intO{ G } = 0,
\end{align}
for a certain $q>1$. We adopt the standard weak formulation for the Navier--Stokes system. The velocity component belongs to the Ladyzhenskaya class 
\begin{equation} \label{w1}
\vu \in L^\infty_{\rm loc}([0,\infty); L^2(\Omega; R^d)) \cap 
L^2_{\rm loc}([0, \infty); W^{1,2}_0 (\Omega; R^d)), 
\end{equation}
\begin{align} 
	\Div \vu &= 0 \ \mbox{a.a. in}\ (0, \infty) \times \Omega, \label{w2}\\ 
\left[ \intO{ \vu \cdot \bfphi } \right]_{t = 0}^{t = \tau} &=
\int_0^\tau \intO{ \Big( \vu \cdot \partial_t \bfphi + (\vu \otimes \vu) : \Grad \bfphi - 
	\mu \Grad \vu \cdot \Grad \bfphi - \Theta \Grad G \cdot \bfphi \Big) } \dt 
\label{w3}
\end{align}
for any $\tau > 0$, and any $\bfphi \in C^1_c([0, \infty) \times \Omega; R^d)$, $\Div \bfphi = 0$.

The temperature $\Theta$ belongs to the class 
\begin{equation} \label{w4}
	\Theta \in L^2_{\rm loc}([0,T); W^{1,2}(\Omega)) \cap 
	L^\infty_{\rm loc}([0, \infty) \times \Ov{\Omega}), 
	\end{equation}
and 
\begin{align} 
\Theta + \alpha \avintO{\Theta} - \widetilde{\vt}_B
&\in L^2_{\rm loc}([0, \infty); W^{1,2}_0 (\Omega)), \label{w5} \\ 	
\left[ \intO{ \Theta \varphi } \right]_{t = 0}^{t = \tau} 
&= \int_0^\tau \intO{ \Big( \Theta \partial_t \varphi + \vu \Theta \cdot \Grad \varphi - \kappa \Grad \Theta \cdot \Grad \varphi \Big) } \dt 
\label{w6}		 
\end{align}
for any $\tau > 0$, and any $\varphi \in C^1_{c}([0, \infty) \times \Omega)$,
where $\widetilde{\vt}_B$ is the unique harmonic extension of the boundary data, namely
\[
\Del \widetilde{\vt}_B = 0 \ \mbox{in}\ \Omega,\ \widetilde{\vt}_B|_{\partial \Omega} = \vtB.
\]
To simplify notation, we shall use the symbol $\vtB$ for both the boundary datum and
its harmonic extension hereafter. In addition, we suppose $\vtB$ is sufficiently smooth, say 
	of class $C^2(\Ov{\Omega})$. 

It follows from the above that 
\begin{equation} \label{w8}
\vu \in C_{\rm weak}([0, \infty); L^2(\Omega; R^d)), \ 
\Theta \in C_{\rm weak}([0, \infty); L^q(\Omega; R^d)),\ 1 \leq q < \infty.
\end{equation}
In particular, 
\begin{equation} \label{w10}
	t \mapsto \avintO{ \Theta (t, \cdot) } \in C[0, \infty).
\end{equation}

In addition, we impose the relevant energy inequality for the Navier--Stokes system: 
\begin{align} \nonumber
	& \left[ \psi \intO{ \frac{1}{2} |\vu |^2 } \right]_{t = 0}^{t = \tau} 
	+ \int_{0}^\tau \psi \intO{ \mu |\Grad \vu |^2 } \dt 
	-\int_0^\tau \partial_t \psi \intO{ \frac{1}{2} |\vu|^2 } \dt \\
 \label{w7}
	& \leq - \int_{0}^\tau \psi 
	\intO{ \Theta \Grad G \cdot \vu } \dt 
\end{align}	  
for any $\tau > 0$, and any $\psi \in C^1_{c}[0, \infty)$, $\psi \geq 0$.

The existence of global in time weak solutions was proved in \cite[Theorem 2.3]{AbbFei22}.

\subsection{Energy balance for the temperature}
\label{web}

We first introduce a new function 
\begin{equation} \label{w10a}
\mathcal{T} = \Theta + \alpha \avintO{ \Theta } - {\vt}_B.
\end{equation}
Then, in accordance with \eqref{w4}--\eqref{w6}, 
\begin{equation} \label{w11}
\mathcal{T} \in L^2_{\rm loc}([0, \infty); W^{1,2}_0 (\Omega)) \cap 
L^\infty_{\rm loc}([0, \infty) \times \Omega) \cap C_{\rm weak}([0, \infty); 
L^q(\Omega)),\ 1 \leq q < \infty.
\end{equation}
Moreover, the weak formulation \eqref{w6} can be rewritten in the form
\begin{align} 
	\left[ \intO{ \left[ \mathcal{T} - \frac{\alpha}{1 + \alpha} \avintO{ 
			\mathcal{T} } \right] \varphi  } \right]_{t = 0}^{t = \tau} 
	&= \int_0^\tau \intO{ \left[ \mathcal{T} - \frac{\alpha}{1 + \alpha} \avintO{ 
			\mathcal{T} } \right] \partial_t \varphi } \dt \br 
		 & + \int_0^\tau \intO{ \Big( \left[ \mathcal{T} + \vtB \right] \vu \cdot \Grad \varphi 
		  - \kappa \Grad \mathcal{T} \cdot \Grad \varphi \Big) } \dt \label{w12}
\end{align}
for any $\varphi \in C^1_{c}([0, \infty) \times \Omega)$. 
Introducing a strictly positive bounded self--adjoint operator $\Lambda$, 
\[
\Lambda (Z) = Z - \frac{\alpha}{1 + \alpha} \avintO{Z},\ Z \in L^2(\Omega),
\]
we may rewrite
the integral identity \eqref{w12} in a concise form
 \begin{align} 
 	\left[ \intO{ \Lambda( \mathcal{T} ) \varphi  } \right]_{t = 0}^{t = \tau} 
 	= \int_0^\tau \intO{ \Lambda(\mathcal{T}) \partial_t \varphi } \dt 
 	+ \int_0^\tau \intO{ \Big( \left[ \mathcal{T} + \vtB \right] \vu  \cdot \Grad \varphi 
 	- \kappa \Grad \mathcal{T} \cdot \Grad \varphi \Big) } \dt.  \label{w13}
 \end{align}

Our goal is to justify the choice $\varphi  = \mathcal{T}$ as an admissible test function in \eqref{w13}. To this end, observe that the quantity 
\[
F = \Div \Big(\vu \left[ \mathcal{T} + \vtB \right]  - \kappa \Grad \mathcal{T}  
\Big)
\]
can be interpreted as a bounded function in $L^2_{\rm loc}([0, \infty); W^{-1,2}( 
\Omega))$. Moreover, extending 
\[
\mathcal{T}(t, \cdot) = \mathcal{T}(0, \cdot),\ 
F (t, \cdot) = 0 \ \mbox{for}\ t \leq 0 
\]
we obtain 
\begin{equation} \label{w14}
	\frac{\D }{\dt} { \Lambda(\mathcal{T}) } = 
	- F \ \mbox{in}\ L^2_{\rm loc}(R; W^{-1,2}(\Omega))\,,
	\end{equation}
	where we have used the standard Gelfand triple identification 
\[
W^{1,2}_0 \subset L^2 \approx L^2 \subset W^{-1,2}.
\]

Next, we regularize \eqref{w12} in the time variable by means of a convolution 
with a family of regularizing kernels $(\xi_\delta)$, $[h]_\delta \equiv h * \xi_\delta$, obtaining	 
\[
	\frac{\D }{\dt} { \Lambda(\mathcal{T}_\delta) } =
	- \Div \Big[ \vu[ \mathcal{T} + \vtB] - \kappa \Grad \mathcal{T} \Big]_\delta  \ \mbox{in}\ R.  
\]
Testing the above relation by 
$\mathcal{T}_\delta$, we get 
\[
\frac{1}{2} \frac{\D }{\dt} \intO{ \Lambda(\mathcal{T}_\delta) \mathcal{T}_\delta }
= \intO{ \Big[ \vu[ \mathcal{T} + \vtB] - \kappa \Grad \mathcal{T} \Big]_\delta \cdot 
	\Grad \mathcal{T}_\delta }.
\]

Finally, letting $\delta \to 0$, we get the desired conclusion
\begin{align} \label{w15}
\left[ \psi \intO{ \frac{1}{2} \Lambda (\mathcal{T} ) \mathcal{T} } \right]_{t = 0}^{t = \tau} 
&+ \int_0^\tau \psi \intO{ \kappa |\Grad \mathcal{T} |^2 } \dt 
- \int_0^\tau \partial_t \psi \intO{ \frac{1}{2} \Lambda (\mathcal{T} ) \mathcal{T} } \dt \br 
&= \int_0^\tau \psi \intO{ \vu \Big( \mathcal{T} + \vtB \Big) \cdot \Grad \mathcal{T}  } = \int_0^\tau \psi \intO{ \vu \cdot \Grad \mathcal{T} \vtB } \dt,
\end{align} 	
for any $\tau > 0$ and $\psi \in C^1_c[0, \infty)$, where we have used $\Div \vu = 0$. 

\begin{Remark} \label{Rw1}
The above inequality holds for $a.a.$ $\tau > 0$ as $\mathcal{T}$ is only weakly continuous in time. As we shall see below, however, strong continuity of $\mathcal{T}$ and, consequently, of $\Theta$ can be established. 
\end{Remark}	

\subsection{Higher regularity of the temperature}

Repeating the arguments of the previous part  we first establish an auxiliary 
result concerning uniqueness of the temperature $\Theta$ in terms of the velocity 
field $\vu$. 

\begin{Proposition}[{\bf Uniqueness of $\Theta$}] \label{Pw1}
	Let the velocity $\vu$, 
	\[ 
	\vu \in L^\infty_{\rm loc}([0,\infty); L^2(\Omega; R^d)) \cap 
	L^2_{\rm loc}([0, \infty); W^{1,2}_0 (\Omega; R^d)),\ \Div \vu = 0,  
	\]
and the initial distribution of the temperature
\[
\Theta_0 \in L^\infty (\Omega)
\]
be given.

Then the problem \eqref{w5}, \eqref{w6} admits at most one solution 
$\Theta$, $\Theta(0, \cdot) = \Theta_0$,  in the class 
\[
		\Theta \in L^2_{\rm loc}([0,T); W^{1,2}(\Omega)) \cap 
	L^\infty_{\rm loc}([0, \infty) \times \Ov{\Omega}).
\]	
	\end{Proposition}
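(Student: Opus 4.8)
The plan is to reduce the statement to a linear energy estimate for the difference of two putative solutions, in exactly the spirit of the energy balance \eqref{w15} established above. Suppose $\Theta_1,\Theta_2$ are two solutions in the indicated class, sharing the same velocity field $\vu$ and the same initial datum $\Theta_0$, and set $w = \Theta_1 - \Theta_2$. Since the boundary datum $\vtB$ and the transporting velocity $\vu$ are common to both, and both the affine non--local boundary condition \eqref{i11} and the convective term are linear in the unknown, the field $w$ solves the \emph{homogeneous} problem: it satisfies \eqref{w6} with all contributions of $\vtB$ removed, $w(0,\cdot) = 0$, and the combination $\mathcal{W} := w + \alpha \avintO{w}$ has zero trace, with $\mathcal{W} \in L^2_{\rm loc}([0,\infty); W^{1,2}_0(\Omega)) \cap L^\infty_{\rm loc}([0,\infty)\times\Omega)$.

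First I would record the algebraic identity that makes the operator $\Lambda$ reappear. Taking the spatial mean of $\mathcal{W} = w + \alpha\avintO{w}$ gives $\avintO{\mathcal{W}} = (1+\alpha)\avintO{w}$, whence
\begin{align*}
w = \mathcal{W} - \alpha \avintO{w} = \mathcal{W} - \frac{\alpha}{1+\alpha}\avintO{\mathcal{W}} = \Lambda(\mathcal{W}).
\end{align*}
Consequently the weak formulation for $w$ takes precisely the concise form \eqref{w13} with $\mathcal{T}$ replaced by $\mathcal{W}$ and with $\vtB \equiv 0$: for every admissible $\varphi$,
\begin{align*}
\left[\intO{\Lambda(\mathcal{W})\varphi}\right]_{t=0}^{t=\tau}
= \int_0^\tau\intO{\Lambda(\mathcal{W})\partial_t\varphi}\dt
+ \int_0^\tau\intO{\Big(\mathcal{W}\,\vu\cdot\Grad\varphi - \kappa\Grad\mathcal{W}\cdot\Grad\varphi\Big)}\dt.
\end{align*}

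Next I would justify the choice $\varphi = \mathcal{W}$ by repeating verbatim the regularization argument of Section \ref{web}: one reads $\frac{\D}{\dt}\Lambda(\mathcal{W}) = -\Div(\vu\,\mathcal{W} - \kappa\Grad\mathcal{W})$ as an identity in $L^2_{\rm loc}(R; W^{-1,2}(\Omega))$ through the Gelfand triple $W^{1,2}_0 \subset L^2 \subset W^{-1,2}$, mollifies in time, tests by $\mathcal{W}_\delta$, and passes to the limit $\delta\to 0$. Because $\vtB$ is now absent, the analogue of the right--hand side of \eqref{w15} vanishes identically: the convective term obeys $\intO{\mathcal{W}\,\vu\cdot\Grad\mathcal{W}} = \tfrac{1}{2}\intO{\vu\cdot\Grad(\mathcal{W}^2)} = 0$ thanks to $\Div\vu = 0$ and the zero trace of $\mathcal{W}$. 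With $\mathcal{W}(0,\cdot) = 0$ this produces, for a.a.\ $\tau>0$,
\begin{align*}
\intO{\tfrac{1}{2}\Lambda(\mathcal{W})(\tau)\,\mathcal{W}(\tau)} + \kappa\int_0^\tau\intO{|\Grad\mathcal{W}|^2}\dt = 0.
\end{align*}

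Finally I would invoke strict positivity of $\Lambda$ to close the argument. Using $\avintO{Z}\,\intO{Z} = \tfrac{1}{|\Omega|}\big(\intO{Z}\big)^2 \le \|Z\|_{L^2(\Omega)}^2$ (Cauchy--Schwarz), one obtains the coercivity bound $\intO{\Lambda(Z)Z} \ge \tfrac{1}{1+\alpha}\|Z\|_{L^2(\Omega)}^2$. Since both terms in the displayed identity are non--negative, each of them must vanish; in particular $\mathcal{W}(\tau,\cdot) = 0$ for a.a.\ $\tau$, and therefore $w = \Lambda(\mathcal{W}) = 0$, i.e. $\Theta_1 = \Theta_2$. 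The only genuinely delicate point is the admissibility of the test function $\mathcal{W}$, which is why the proof leans on the time--regularization/duality scheme already set up for \eqref{w15}; everything else is elementary linear algebra together with the coercivity of $\Lambda$.
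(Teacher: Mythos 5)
Your proof is correct and takes essentially the same route as the paper's: the paper likewise passes to the difference in the $\mathcal{T}$--variable (your $\mathcal{W}$ is exactly $\mathcal{T}_1 - \mathcal{T}_2$), justifies testing by that difference via the time--regularization/Gelfand--triple scheme of Section \ref{web}, annihilates the convective term using $\Div \vu = 0$ together with the zero trace, and concludes from the positivity of $\Lambda$. Your explicit coercivity bound $\intO{ \Lambda(Z) Z } \geq \tfrac{1}{1+\alpha} \| Z \|_{L^2(\Omega)}^2$ and the observation $w = \Lambda(\mathcal{W})$ merely make quantitative what the paper invokes implicitly.
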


\begin{proof}
Let $\Theta_1$, $\Theta_2$ be two solutions of \eqref{w5}, \eqref{w6}. Passing to 
$\mathcal{T}_1$, $\mathcal{T}_2$ defined through formula \eqref{w10a}, we may 
repeat step by step the arguments of Section \ref{web} obtaining
\begin{align*}
 & \frac{1}{2} \intO{ \Lambda( \mathcal{T}_1 - \mathcal{T}_2 )( \mathcal{T}_1 - \mathcal{T}_2 )(\tau, \cdot) } + 
\kappa \int_0^\tau \intO{ |\Grad (\mathcal{T}_1 - \mathcal{T}_2) |^2 } \dt\cr
& \qquad = \int_0^\tau \intO{ \vu (\mathcal{T}_1 - \mathcal{T}_2) \cdot \Grad  (\mathcal{T}_1 - \mathcal{T}_2) } \dt
\end{align*}
for a.a. $\tau > 0$, cf. \eqref{w15}. Seeing that the regularity of $\vu$ and $\Theta$ is sufficient to justify the identity
\[
\int_0^\tau \intO{ (\mathcal{T}_1 - \mathcal{T}_2) \vu \cdot 
	\Grad  (\mathcal{T}_1 - \mathcal{T}_2) } \dt = 
\frac{1}{2}	\int_0^\tau \intO{ \vu  \cdot 
		\Grad  (\mathcal{T}_1 - \mathcal{T}_2)^2 } \dt = 0
\] 
we conclude $\mathcal{T}_1 = \mathcal{T}_2$.
	\end{proof}

Proposition \ref{Pw1} allows us to deduce higher regularity of the temperature 
via a bootstrap argument. Specifically, we may regularize the velocity $\vu$ 
to obtain more regular solutions, where pointwise estimates are available. In 
particular, seeing that the boundary values 
\[
 \Theta |_{\partial \Omega} = \vtB - \alpha \avintO{ \Theta} \in 
C_{\rm loc}([0, \infty); W^{2,q}(\partial\Omega)), 
\]
and 
\[
\vu \cdot \Grad \Theta \in L^q_{\rm loc}([0,T) \times \Ov{\Omega}) 
\ \mbox{for a certain}\ q > 1, 
\]	
we may use the maximal $L^p-L^q$ regularity estimates established by Denk, Hieber, and Pr\" uss \cite{DEHIEPR}	to conclude 
\begin{equation} \label{w17}
	\partial_t \Theta , \ D^2_{x,x} \Theta \in L^q_{\rm loc}([0, \infty) \times \Ov{\Omega}) \ \mbox{for a certain}\ q > 1
\end{equation}
as long as the initial datum $\Theta_0$ is regular enough, 
 as specified in \eqref{d2}. 

In particular, the equations \eqref{i9} as well as \eqref{w12}, \eqref{w13} are satisfied in the strong sense a.a. in $(0,T) \times \Omega$. In addition, 
\begin{equation} \label{w18}
\Theta \in C([0, \infty); L^p(\Omega)) \ \mbox{for any}\ 1 \leq p < \infty, 
\end{equation}
which, in particular, justifies validity of \eqref{w15} for any $\tau > 0$.

\subsection{Validity of the maximum principle for $\Theta$}
\label{wmp}

Given a positive time $T > 0$, fix two constants satisfying
\[
\underline{\Theta}_{\partial \Omega}^T \leq \inf_{[0,T] \times \partial \Omega} 
\Theta \leq \sup_{[0,T] \times \partial \Omega} \Theta \leq 
\Ov{\Theta}^T_{\partial \Omega}. 
\]
Similarly to \cite[Section 2.2]{FoMaTe}, Wang \cite{WangX}, we claim that any  solution 
$\Theta$ of \eqref{w4}--\eqref{w6} satisfies the following inequalities:
 \begin{align} 
\left[ \intO{ \frac{1}{2} \left([ \Theta - \Ov{\Theta}^T_{\partial \Omega}]^+ \right)^2 } \right]_{t=0}^{t= \tau} 	+ \int_0^\tau \intO{ | \Grad [ \Theta - \Ov{\Theta}^T_{\partial \Omega}]^+ |^2 } \dt &\leq 0, \br
\left[ \intO{ \frac{1}{2} \left([ \Theta - \underline{\Theta}^T_{\partial \Omega}]^- \right)^2 } \right]_{t=0}^{t= \tau}	+ \int_0^\tau \intO{ | \Grad [ \Theta - \underline{\Theta}^T_{\partial \Omega}]^- |^2 } \dt &\leq 0 
 	\label{w22}
\end{align}
for any $0 \leq \tau \leq T$. Indeed the above relations follow by simple multiplication of the equation \eqref{i9} by 
$[ \Theta - \Ov{\Theta}^T_{\partial \Omega}]^+$, $[ \Theta - \Ov{\Theta}^T_{\partial \Omega}]^-$, respectively. This step can be justified 
at least for weak solutions belonging to the regularity class \eqref{w17}. Extension to less regular initial data follows by density arguments.

Next, we introduce the \emph{parabolic boundary} 
\[
\Gamma_T = \{ 0 \} \times \Ov{\Omega} \cup [0,T] \times \partial \Omega.
\]
As $\Theta \in L^\infty_{\rm loc}([0, \infty) \times \Ov{\Omega})$, 
we may define
\[
\Ov{\Theta}(\tau) = {\rm ess}\sup_{[0, \tau) \times \Omega} \Theta,\ 
\underline {\Theta}(\tau) = {\rm ess}\inf_{[0, \tau) \times \Omega} \Theta. 
\]
Obviously, $\tau \mapsto \Ov{\Theta}[0,\tau]$ is a non--decreasing while 
$\tau \mapsto \underline{\Theta}[0,\tau]$ is a non--increasing function of $\tau$.

If $\vu$, and, consequently, $\Theta$ are smooth, the standard parabolic maximum/minimum principle yields 
\begin{equation} \label{w20}
\Ov{\Theta}(\tau) \leq \sup_{\Gamma_\tau} \Theta,\ 
\underline{\Theta}(\tau) \geq \inf_{\Gamma_\tau} \Theta.
\end{equation} 
Using \eqref{w22} we may extend the maximum/minimum principle to any weak solution $(\vu, \Theta)$ of the OB system. 
 Namely, we have 
	
	\begin{Proposition}[{\bf Maximum/minimum principle}] \label{wP2}
		
		Let $\vu$, $\Theta$ be a weak solution of problem \eqref{w2}, \eqref{w3}, 
		\eqref{w5}, \eqref{w6} belonging to the regularity class \eqref{w1}, \eqref{w4}. 
		
		Then 
		\begin{equation} \label{w21}
			\Ov{\Theta}(\tau) \leq {\rm ess} \sup_{\Gamma_\tau} \Theta,\ 
			\underline{\Theta} (\tau) \geq {\rm ess} \inf_{\Gamma_\tau} \Theta
		\end{equation}
for any $\tau \geq 0$.		
		
		\end{Proposition}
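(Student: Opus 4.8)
The plan is to read off \eqref{w21} directly from the differential inequalities \eqref{w22}, choosing the free constants there to equal the essential extrema over the parabolic boundary. Fix $\tau > 0$, take $T = \tau$ in \eqref{w22}, and set
\[
M = {\rm ess}\sup_{\Gamma_\tau} \Theta = \max\Big\{ {\rm ess}\sup_{\Omega} \Theta(0, \cdot),\ {\rm ess}\sup_{[0,\tau] \times \partial \Omega} \Theta \Big\}.
\]
Since $M \geq \sup_{[0,\tau] \times \partial \Omega} \Theta$, the constant $M$ is an admissible choice for $\Ov{\Theta}^\tau_{\partial \Omega}$ in the first line of \eqref{w22}. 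First I would insert $\Ov{\Theta}^\tau_{\partial \Omega} = M$. Because $M \geq {\rm ess}\sup_{\Omega} \Theta(0,\cdot)$, the truncation $[\Theta(0,\cdot) - M]^+$ vanishes a.e.\ in $\Omega$, so the initial contribution in the bracket drops out and \eqref{w22} reduces to
\[
\intO{ \frac{1}{2} \left( [\Theta(s, \cdot) - M]^+ \right)^2 } + \int_0^s \intO{ |\Grad [\Theta - M]^+|^2 } \dt \leq 0
\]
for every $s \in [0, \tau]$. Both terms are non--negative, hence each vanishes; in particular $[\Theta(s,\cdot) - M]^+ = 0$, i.e.\ $\Theta(s,\cdot) \leq M$ a.e.\ in $\Omega$, for a.a.\ $s$, and then for every $s \in [0,\tau]$ by the strong continuity \eqref{w18}. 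Taking the essential supremum over $s \in [0,\tau)$ and $x \in \Omega$ yields $\Ov{\Theta}(\tau) \leq M = {\rm ess}\sup_{\Gamma_\tau} \Theta$, which is the first assertion of \eqref{w21}.

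The lower bound is obtained by the symmetric argument, now using the second line of \eqref{w22} with $\underline{\Theta}^\tau_{\partial \Omega} = m := {\rm ess}\inf_{\Gamma_\tau} \Theta$. Here $m \leq \inf_{[0,\tau] \times \partial \Omega} \Theta$ makes $m$ admissible, while $m \leq {\rm ess}\inf_{\Omega} \Theta(0,\cdot)$ forces $[\Theta(0,\cdot) - m]^- = 0$ a.e., so the initial term again disappears and the same reasoning gives $\Theta(s,\cdot) \geq m$ a.e.\ for all $s \in [0,\tau]$, whence $\underline{\Theta}(\tau) \geq m = {\rm ess}\inf_{\Gamma_\tau} \Theta$.

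The genuine content of the argument is already packaged in \eqref{w22}, obtained by testing \eqref{i9} with the truncations $[\Theta - \Ov{\Theta}^\tau_{\partial \Omega}]^{+}$ and $[\Theta - \underline{\Theta}^\tau_{\partial \Omega}]^{-}$. The point I expect to require the most care — and the reason the non--local condition \eqref{i11} does not interfere — is that these truncations lie in $W^{1,2}_0(\Omega)$ in the spatial variable: precisely because $\Theta \leq \Ov{\Theta}^\tau_{\partial \Omega}$ (resp.\ $\Theta \geq \underline{\Theta}^\tau_{\partial \Omega}$) on $[0,\tau] \times \partial \Omega$, the truncations vanish on the boundary, so the boundary term is absent and the convective term integrates to zero using $\Div \vu = 0$. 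Given the regularity \eqref{w17}--\eqref{w18} this test is legitimate, with the extension to merely bounded data handled by the density argument indicated after \eqref{w22}. Once \eqref{w22} is available, the only remaining subtlety for the Proposition itself is the passage from the a.a.-in-time pointwise bound to the statement for every $\tau$, which is supplied by the continuity \eqref{w18} together with the monotonicity of $\tau \mapsto \Ov{\Theta}(\tau)$ and $\tau \mapsto \underline{\Theta}(\tau)$ noted above.
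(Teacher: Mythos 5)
Your proposal is correct and follows essentially the same route as the paper: the paper's proof of Proposition~\ref{wP2} consists precisely of the truncation inequalities \eqref{w22} (obtained by testing \eqref{i9} with $[\Theta - \Ov{\Theta}^T_{\partial\Omega}]^{+}$, $[\Theta - \underline{\Theta}^T_{\partial\Omega}]^{-}$, justified first in the regularity class \eqref{w17} and then by density), combined with exactly your choice of the constants as the essential extrema over the parabolic boundary $\Gamma_\tau$, so that the initial term in the bracket vanishes and the sign of the remaining terms forces the pointwise bound. Your write-up merely makes explicit the step the paper leaves implicit (``Using \eqref{w22} we may extend the maximum/minimum principle to any weak solution''), including the correct observation that the truncations lie in $W^{1,2}_0(\Omega)$ so the non-local boundary condition plays no role.
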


\section{Uniform bounds on the temperature}
\label{U}	

The results obtained in this section are crucial for the long--time behaviour of the OB system and may be also of independent interest. Here and hereafter, we impose a fundamental hypothesis 
\begin{equation} \label{UU}
	0 < \alpha < 1. 
	\end{equation}
A short inspection of the proof in \cite{BelFeiOsch} reveals the value of 
the constant $\alpha$, specifically, 
\[
\alpha = \frac{\lambda (\Ov{\vr}, \Ov{\vt})}{1 - \lambda(\Ov{\vr}, \Ov{\vt})},
\] 
where
\begin{align}
\lambda(\Ov{\vr}, \Ov{\vt}) &= \frac{\Ov{\vt} \beta (\Ov{\vr}, \Ov{\vt} ) }
{\Ov{\vr} c_p(\Ov{\vr}, \Ov{\vt}) } \frac{\partial p(\Ov{\vr}, \Ov{\vt} )}{\partial \vt}, \  
	\beta (\Ov{\vr}, \Ov{\vt} ) = \frac{1}{\Ov{\vr}}  \frac{\partial p(\Ov{\vr}, \Ov{\vt} ) }{\partial \vt} \left( \frac{\partial p(\Ov{\vr}, \Ov{\vt} ) }{\partial \vr} \right)^{-1},\br  
	c_p (\Ov{\vr}, \Ov{\vt} ) &= \frac{\partial e(\Ov{\vr}, \Ov{\vt} ) }{\partial \vt}	+ \Ov{\vr}^{-1} \Ov{\vt}
	\beta(\Ov{\vr}, \Ov{\vt} ) \frac{\partial p(\Ov{\vr}, \Ov{\vt} ) }{\partial \vt}, 
\nonumber	
\end{align}
and where $\beta$ is the coefficient of thermal expansion and $c_p$ the specific heat at constant pressure evaluated for the primitive Navier--Stokes--Fourier system at the reference (constant) density $\Ov{\vr}> 0$ and temperature $\Ov{\vt} > 0$. Thus, by standard manipulations, we get 
\[
\alpha = \frac{c_p}{c_v} -1=\gamma-1<1,
\]
in agreement with \eqref{UU}.

Our first result establishes uniform boundedness of the temperature 
as time approaches infinity in terms of the initial and boundary data. The arguments depend solely on the maximum/minimum 
principle stated in Proposition \ref{wP2}.

\begin{mdframed}[style=MyFrame]
	
	\begin{Theorem}[{\bf Uniform bounds on the temperature}] \label{UT1}
		Let 
		\[
		0 < \alpha < 1.
		\]
Let $\vu$, $\Theta$ be a weak solution of the OB system \eqref{w2}, \eqref{w3}, 
\eqref{w5}, \eqref{w6} belonging to the regularity class \eqref{w1}, \eqref{w4}. 		

Then  
\begin{equation} \label{U1}
| \Theta(t,x) | \leq {\rm ess} \sup_{\Omega} |\Theta(0, \cdot)|  + \frac{2}{1 - \alpha^2} 
\max_{\partial \Omega} |\vtB |
\end{equation} 
for a.a. $(t, x) \in (0, \infty) \times \Omega$.		
		
		\end{Theorem}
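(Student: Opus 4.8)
The plan is to leverage the maximum/minimum principle of Proposition \ref{wP2} and convert the nonlocal boundary condition into a self-referential system of scalar inequalities for the extremal values of $\Theta$. Throughout, fix $\tau > 0$ and abbreviate $\Ov{\Theta}(\tau) = {\rm ess}\sup_{[0,\tau)\times\Omega}\Theta$, $\underline{\Theta}(\tau) = {\rm ess}\inf_{[0,\tau)\times\Omega}\Theta$; both are finite because $\Theta \in L^\infty_{\rm loc}([0,\infty)\times\Ov{\Omega})$. Write $D_0 = {\rm ess}\sup_\Omega |\Theta(0,\cdot)|$ and $B = \max_{\partial\Omega}|\vtB|$. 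By Proposition \ref{wP2},
\[
\Ov{\Theta}(\tau) \leq {\rm ess}\sup_{\Gamma_\tau}\Theta = \max\Big\{ {\rm ess}\sup_\Omega \Theta(0,\cdot),\ {\rm ess}\sup_{[0,\tau]\times\partial\Omega}\Theta \Big\},
\]
and symmetrically $\underline{\Theta}(\tau)$ is bounded below by the corresponding infima.

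The first key step is to evaluate the lateral contribution via the nonlocal condition \eqref{i11}. On $\partial\Omega$ one has $\Theta(t,\cdot) = \vtB - \alpha\avintO{\Theta(t,\cdot)}$, where $\vtB$ depends only on $x$ and the spatial average only on $t$; hence the supremum over the product set splits, and since $\alpha>0$,
\[
{\rm ess}\sup_{[0,\tau]\times\partial\Omega}\Theta = \max_{\partial\Omega}\vtB - \alpha\inf_{t\in[0,\tau]}\avintO{\Theta(t,\cdot)},
\]
with the analogous identity for the infimum involving $\min_{\partial\Omega}\vtB - \alpha\sup_{t}\avintO{\Theta(t,\cdot)}$. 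Because $\underline{\Theta}(\tau) \leq \avintO{\Theta(t,\cdot)} \leq \Ov{\Theta}(\tau)$ for every $t\in[0,\tau]$ — using the continuity \eqref{w10} of $t\mapsto\avintO{\Theta}$ together with monotonicity at $t=\tau$ — these identities yield the coupled bounds
\[
\Ov{\Theta}(\tau) \leq \max\big\{ D_0,\ B - \alpha\,\underline{\Theta}(\tau) \big\}, \qquad -\underline{\Theta}(\tau) \leq \max\big\{ D_0,\ B + \alpha\,\Ov{\Theta}(\tau) \big\}.
\]

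The decisive step is to break the circularity by substituting the second inequality into the first, so that the self-reference reappears with the strictly contractive coefficient $\alpha^2 < 1$:
\[
\Ov{\Theta}(\tau) \leq \max\big\{ D_0,\ B + \alpha D_0,\ (1+\alpha)B + \alpha^2\,\Ov{\Theta}(\tau) \big\}.
\]
In the only nontrivial case the finiteness of $\Ov{\Theta}(\tau)$ permits the rearrangement $(1-\alpha^2)\Ov{\Theta}(\tau) \leq (1+\alpha)B$, i.e. $\Ov{\Theta}(\tau) \leq B/(1-\alpha) = (1+\alpha)B/(1-\alpha^2) \leq 2B/(1-\alpha^2)$; comparing the three cases then gives $\Ov{\Theta}(\tau) \leq D_0 + \tfrac{2}{1-\alpha^2}B$, uniformly in $\tau$. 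Running the same reasoning with the roles of supremum and infimum exchanged bounds $-\underline{\Theta}(\tau)$ by the same quantity, and letting $\tau\to\infty$ along the monotone limits yields \eqref{U1}. I expect the main obstacle to be exactly this circular coupling: one must first secure finiteness of $\Ov{\Theta}(\tau)$, $\underline{\Theta}(\tau)$ on each finite interval (guaranteed by the $L^\infty_{\rm loc}$ regularity) before the rearrangement is legitimate, and one must control the averages up to and including the endpoint $t=\tau$, which is precisely where the weak continuity \eqref{w10} is invoked.
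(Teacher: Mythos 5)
Your proposal is correct and follows essentially the same route as the paper's proof: the maximum/minimum principle of Proposition \ref{wP2}, the coupling of $\Ov{\Theta}(\tau)$ and $\underline{\Theta}(\tau)$ through the nonlocal boundary condition, and the resolution of the resulting circular dependence via the contraction factor $\alpha^2 < 1$. The only difference is organizational: the paper splits into cases up front (extremum attained at the initial time versus on the lateral boundary) before deriving the coupled inequalities, whereas you carry the maxima through a single chain of inequalities and perform the case analysis at the end — a slightly cleaner packaging of the same dichotomy.
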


	\end{mdframed}
	
\begin{proof}

If $\alpha = 0$, the result follows directly from Proposition \ref{wP2}. We therefore focus on $0 < \alpha < 1$. Moreover, we consider the case where both 
the supremum and infimum of $\Theta$ are attained on the boundary 
for large time:
\begin{align}
{\rm ess} \sup_{[0,\tau) \times \Omega} \Theta \equiv 	 
\Ov{\Theta}(\tau) &= \max \vtB - \alpha \avintO{ \Theta(\Ov{t}, \cdot) }
\ \mbox{for some}\ 0 \leq \Ov{t} \leq \tau , \br
{\rm ess} \inf_{[0,\tau) \times \Omega} \Theta \equiv\underline{\Theta}(\tau) &= \min \vtB - \alpha \avintO{ \Theta(\underline{t}, \cdot) }
\ \mbox{for some}\ 0 \leq \underline{t} \leq \tau 	 \label{U2}
\end{align}	
for all $\tau > T > 0$. Indeed  
if 
\[
\Ov{\Theta}(\tau) = {\rm ess} \sup_\Omega \Theta(0, \cdot) 
\ \mbox{for all}\ \tau ,  
\]
then, by the minimum principle, 
\begin{align}
\underline{\Theta}(\tau) &\geq \min \left\{ {\rm ess} \inf_\Omega \Theta (0, \cdot);\ \min_{0 \leq t \leq \tau} \{\min_{\partial\Omega} \vtB - \alpha \avintO{ \Theta(t, \cdot) } \} \right\} \br 
&\geq \min \left\{ {\rm ess} \inf_\Omega \Theta (0, \cdot);\ 
 \min_{\partial\Omega} \vtB - \alpha \Ov{\Theta}(\tau) \right\} 
 \ge \min \left\{ {\rm ess} \inf_\Omega \Theta (0, \cdot);\ 
 \min_{\partial\Omega} \vtB - {\rm ess} \sup_\Omega \Theta(0, \cdot) \right\}.
\nonumber
\end{align}
A similar argument can be used to handle the case 
\[	
\underline{\Theta}(\tau) = {\rm ess} \inf_\Omega \Theta(0, \cdot) 
\ \mbox{for all}\ \tau .  
\]	

Finally, we consider the situation specified in \eqref{U2}. 
We have 
\[
\Ov{\Theta}(\tau) \leq \max_{\partial\Omega} \vtB - \alpha \avintO{ 
\Theta(\Ov{t}, \cdot) } \leq \max_{\partial\Omega} \vtB - \alpha \underline{\Theta}(\tau),
\]
and, similarly, 
\[
\underline{\Theta}(\tau) \geq \min_{\partial\Omega} \vtB - \alpha \Ov{\Theta} (\tau). 
\]
Consequently, 
\[
\Ov{\Theta}(\tau) \leq \max_{\partial\Omega} \vtB - \alpha \min_{\partial\Omega} \vtB + \alpha^2 \Ov{\Theta}(\tau) 
\]
yielding the desired conclusion as $0 < \alpha < 1$. The quantity 
$\underline{\Theta}(\tau)$ can be estimated in the same way.

	\end{proof}
	
\section{Levinson dissipativity}
\label{B}

Our next goal is to show that the weak solutions $(\vu, \Theta)$ generate a dissipative system in the sense of Levinson, specifically, any global trajectory will approach a unique bounded set as time goes to infinity. 

\begin{mdframed}[style=MyFrame]

\begin{Theorem}[{\bf Levinson dissipativity}] \label{TB1}
	
	Suppose 
	\[
	0 <\alpha < 1. 
	\] 
	Let 
	$(\vun, \Theta_n)_{n=1}^\infty$ be a sequence of solutions to the OB system 
	defined on the time intervals $(T_n, \infty)$, $T_n \to - \infty$ such that 
	\begin{equation} \label{B2b}
		\| \vun(T_n, ,\cdot) \|_{L^2(\Omega; R^d)} +  \| \Theta_n(T_n, \cdot) \|_{L^\infty(\Omega)} \leq \mathcal{F}_0\ \mbox{uniformly for}\ n \to \infty.	
	\end{equation}	
	
	Then, up to a suitable subsequence, 
	\begin{align}
		\vun &\to \vu \ \mbox{in}\ C_{\rm weak, loc}(R; L^2(\Omega; R^d)) \ \mbox{and weakly in}\ L^2_{\rm loc}(R; W^{1,2}_0 (\Omega; R^d)), \br 
		\Theta_n &\to \Theta \ \mbox{in}\ C_{\rm loc}(R; L^2(\Omega)),
		\ \mbox{weakly-(*) in}\ L^\infty_{\rm loc}(R \times \Ov{\Omega}), 
		\ \mbox{and weakly in} \ L^2_{\rm loc}(R; W^{1,2} (\Omega)), 
		\label{B3}
	\end{align}
	where $(\vu, \Theta)$ is a solution of the OB system defined in $R \times \Omega$ and satisfying
	\begin{equation} \label{B4}
		\| \vu(t, \cdot) \|_{L^2(\Omega; R^3)} + \| \Theta (t, \cdot) \|_{L^\infty(\Omega)} \leq \mathcal{E}_\infty,\ 
		\mathcal{E}_\infty = \mathcal{F}_\infty (\data),\ t \in R.
	\end{equation}
\end{Theorem}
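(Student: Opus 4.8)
The plan is to combine the $L^\infty$ temperature bound of Theorem \ref{UT1} with the energy inequality \eqref{w7} to secure compactness on every compact time interval, extract an eternal limit solution, and only then upgrade the estimates to data-only bounds by exploiting that the limit carries no finite initial datum. First I would fix a compact interval $[t_1,t_2] \subset R$ and restrict to $n$ so large that $T_n < t_1$. Theorem \ref{UT1}, applied on $(T_n,\infty)$ together with \eqref{B2b}, yields
\[
\| \Theta_n(t,\cdot) \|_{L^\infty(\Omega)} \leq \mathcal{F}_0 + \frac{2}{1-\alpha^2} \max_{\partial \Omega} |\vtB| =: K_0, \qquad t > T_n,
\]
a bound uniform in $n$ (still depending on $\mathcal{F}_0$). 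Inserting this into \eqref{w7}, estimating the buoyancy term by $\big| \intO{ \Theta_n \Grad G \cdot \vun } \big| \leq \tfrac{\mu}{2} \| \Grad \vun \|_{L^2}^2 + C(K_0)$ via Young's and Poincar\'e's inequalities, and running a Gronwall argument from $T_n$, I obtain $\| \vun(t,\cdot) \|_{L^2(\Omega)}^2 + \int_{t_1}^{t_2} \| \Grad \vun \|_{L^2(\Omega)}^2 \dt \leq C(\mathcal{F}_0,\data)$ on $[t_1,t_2]$, again uniform in $n$.

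With these bounds in hand, standard compactness arguments apply. I would use \eqref{w3} and \eqref{w6} to control $\partial_t \vun$ and $\partial_t \Theta_n$ in suitable negative-order spaces, then invoke the Aubin--Lions--Simon lemma to extract a subsequence with $\vun \to \vu$ strongly in $L^2_{\rm loc}(R;L^2)$ and in $C_{\rm weak,loc}(R;L^2)$, and $\Theta_n \to \Theta$ strongly in $C_{\rm loc}(R;L^2)$, together with the weak and weak-$(*)$ convergences listed in \eqref{B3}; a diagonal procedure over an exhaustion of $R$ by compact intervals produces a single subsequence. The strong convergences let me pass to the limit in the nonlinear terms $\vun \otimes \vun$ and $\Theta_n \vun$, while the linear terms pass by weak convergence and the nonlocal boundary contribution is handled through the convergence of $\avintO{\Theta_n}$ guaranteed by \eqref{w10}. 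Hence $(\vu,\Theta)$ is a weak solution of the OB system on $R \times \Omega$.

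The crux is to replace the $\mathcal{F}_0$-dependent bounds by the data-only bound \eqref{B4}, and here the eternal character of the limit is decisive. For the temperature I set the constant $L = \sup_{R \times \partial \Omega} \Theta \leq \max_{\partial \Omega} \vtB - \alpha \inf_{R \times \Omega} \Theta$ and test the equation for $\Theta - L$ by $[\Theta - L]^+$ exactly as in \eqref{w22}. Since $[\Theta - L]^+ \in W^{1,2}_0(\Omega)$ and $\Div \vu = 0$, the convective term drops and Poincar\'e's inequality gives $\tfrac{\D}{\dt} \| [\Theta - L]^+ \|_{L^2}^2 \leq -2\kappa \lambda_1 \| [\Theta - L]^+ \|_{L^2}^2$. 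Integrating from $t_1$ to $t$ and letting $t_1 \to -\infty$, the factor $e^{-2\kappa \lambda_1 (t - t_1)} \| [\Theta - L]^+(t_1) \|_{L^2}^2 \to 0$ (using that $\Theta$ is globally bounded) forces $\Theta \leq L$ on all of $R \times \Omega$; the symmetric argument yields the lower bound. Writing $M = \sup_{R \times \Omega} \Theta$, $m = \inf_{R \times \Omega} \Theta$, the two resulting inequalities $M \leq \max_{\partial \Omega} \vtB - \alpha m$ and $m \geq \min_{\partial \Omega} \vtB - \alpha M$ couple to eliminate the unknowns and give $M(1-\alpha^2) \leq \max_{\partial \Omega} \vtB - \alpha \min_{\partial \Omega} \vtB$, hence a data-only bound $\| \Theta \|_{L^\infty(R \times \Omega)} \leq \tfrac{1}{1-\alpha} \max_{\partial \Omega} |\vtB|$ of the desired form $\mathcal{F}_\infty(\data)$.

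Finally, feeding this data-only temperature bound into the energy inequality \eqref{w7} for the eternal velocity, estimating as before and integrating the Gronwall inequality from $-\infty$ (using the global boundedness of $\| \vu \|_{L^2}$ to kill the boundary term at $-\infty$), I obtain $\| \vu(t,\cdot) \|_{L^2(\Omega)}^2 \leq C \| \Theta \|_{L^\infty}^2 / (\mu \lambda_1)$, again data-only; together these give \eqref{B4}. I expect the main obstacle to be precisely this last stage --- showing the eternal limit obeys bounds independent of $\mathcal{F}_0$ --- since it hinges on parabolic decay emanating from $t = -\infty$ rather than on an estimate propagated forward from a finite initial time. The compactness and limit passage, though requiring some care with the nonlocal boundary term and with time regularity, are otherwise routine and lean directly on the regularity \eqref{w17}--\eqref{w18} already established.
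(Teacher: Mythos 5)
Your proposal follows essentially the same route as the paper's proof: Step 1--3 (the $\mathcal{F}_0$-dependent $L^\infty$ bound from Theorem \ref{UT1}, energy estimates for the velocity, and compactness to extract an eternal limit solution) and Step 4 (the data-only bound \eqref{B4}, obtained by applying the truncation inequalities \eqref{w22} to the eternal solution and coupling the resulting sup/inf inequalities through $\alpha^2 < 1$) match the paper's argument point for point. Your explicit Poincar\'e--Gronwall decay argument emanating from $t_1 \to -\infty$ is exactly the content the paper compresses into ``since $\Theta$ is defined for all $t \in R$, we may apply the inequalities \eqref{w22} to conclude,'' so you have simply made the paper's terse step precise rather than taken a different path.
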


\end{mdframed}

\begin{Remark} \label{RT1}
	
We point out that the constant $\mathcal{F}_\infty$ depends only on the data, meaning specifically the coefficient $\alpha$, $\max_{\partial \Omega} |\vtB|$,
the diffusion coefficients $\mu$, $\kappa$, and the constant $C_p$ in Poincar\' e inequality. In particular, $\mathcal{F}_\infty$ is \emph{independent} of the 
``initial energy'' $\mathcal{F}_0$.	

\end{Remark}

\begin{proof}

{\bf Step 1}
	
To begin, Theorem \ref{UT1} yields a universal bound 
\begin{equation} \label{UU2}
|\Theta_n(t,x) | \leq c(\| \Theta_0 \|_{L^\infty(\Omega)}, \data )
\end{equation}
uniformly for $n \to \infty$.

\medskip

{\bf Step 2}

Next, the energy inequality \eqref{w7} reads 
\[
\frac{\D }{\dt} \intO{ \frac{1}{2} |\vun|^2 } + \mu \intO{ |\Grad \vun |^2 }  
 \leq - \intO{ \Theta_n \Grad G \cdot \vun } 
\leq \| \Theta_n \Grad G \|_{L^\infty([0, \infty) \times \Omega; R^3)} \| \vu_n \|_{L^1(\Omega)}.
\]
As $\vu_n|_{\partial \Omega} = 0$, recalling also \eqref{d3}, 
we may use the uniform bound 
\eqref{UU2} to obtain 
\[
	\limsup_{t \to \infty} \| \vu_n(t, \cdot) \|_{L^2(\Omega; R^d)} \leq 
	c(\| \Theta_0 \|_{L^\infty}, \data) 
\]
uniformly for $n \to \infty$. More specifically, for any $T \in R$
\begin{equation} \label{B1}
\limsup_{n \to \infty} \sup_{t > T}	\| \vu_n(t, \cdot) \|_{L^2(\Omega; R^d)} \leq 	c(\| \Theta_0 \|_{L^\infty}, \data).
\end{equation}

\medskip

{\bf Step 3}

Combining the uniform bounds \eqref{UU2}, \eqref{B1}, with the energy inequalities \eqref{w7}, \eqref{w15} we easily obtain the convergence to a limit 
$(\Theta, \vu)$ claimed in \eqref{B3}. Moreover, the nowadays standard compactness arguments can be used to conclude that the limit $(\vu, \Theta)$ is a weak solution of the OB system in the sense specified in Section \ref{w} defined 
on the entire time interval $t \in R$. 

\medskip 

{\bf Step 4}

It remains to show the universal bound claimed in \eqref{B4}. Let us denote
\[
- \infty < \underline{\Theta} \equiv \inf_{R \times \Omega} \Theta \leq 
\sup_{R \times \Omega} \Theta  \equiv \Ov{\Theta}
\]
and 
\begin{align}
\Ov{\Theta}_{\partial \Omega} &\equiv \sup_{R \times \partial \Omega} \Theta = 
\sup_{t \in R} \left\{ \max_{\partial\Omega} \vtB - \alpha \avintO{\Theta(t, \cdot) } \right\} 
\leq \max_{\partial\Omega} \vtB - \alpha \underline{\Theta},  
\br
\underline{\Theta}_{\partial \Omega} &\equiv \inf_{R \times \partial \Omega} \Theta = 
\inf_{t \in R} \left\{ \min_{\partial\Omega} \vtB - \alpha \avintO{\Theta(t, \cdot) } \right\}
\geq \min_{\partial\Omega} \vtB - \alpha \Ov{\Theta}.
\nonumber
\end{align}

Now, since $\Theta$ is defined for all $t \in R$, we may apply the inequalities 
\eqref{w22} to conclude 
\[
\Theta(t,x) \leq \Ov{\Theta}_{\partial \Omega} \leq \max_{\partial\Omega} \vtB - \alpha \underline{\Theta} \ \mbox{for a.a.}\ (t,x), 
\]
and 
\[
\Theta(t,x) \geq \underline{\Theta}_{\partial \Omega} \geq \min_{\partial\Omega} \vtB - \alpha \Ov{\Theta} \ \mbox{for a.a.}\ (t,x). 
\]
Similarly to the proof of Theorem \ref{UT1} we may infer that 
\[
\Ov{\Theta} \leq \max_{\partial\Omega} \vtB - \alpha \underline{\Theta} \leq 
\max_{\partial\Omega} \vtB + |\min_{\partial\Omega} \vtB| + \alpha^2 \Ov{\Theta}, 
\]
which yields the desired uniform upper bound on $\Ov{\Theta}$. The lower
bound on $\underline{\Theta}$ is obtained in the same way. 

$\Theta$ being uniformly bounded in terms of the data, the same holds true for the $L^2-$norm of the velocity. 

\end{proof}

Motivated by the idea of Sell \cite{SEL} and Ne\v cas, M\' alek \cite{MANE} 
(see also M\' alek and Pra\v z\' ak \cite{MAPR}), we introduce the trajectory 
attractor 
\begin{equation} \label{attra}
\mathcal{A} = \left\{ (\vu, \Theta) \Big| \ \mbox{solution of the OB system}\ 
\in R \times \Omega, \sup_{t \in R} \| \vu(t, \cdot) \|_{L^2(\Omega; R^3)} + \| \Theta (t, \cdot) \|_{L^\infty(\Omega)} \leq \mathcal{F}_\infty \right\}.
\end{equation}

The following properties of $\mathcal{A}$ follow directly from Theorem \ref{TB1}:
\begin{itemize}
	\item 
	$\mathcal{A}$ is non--empty; 
	\item
	$\mathcal{A}$ is time--shift invariant, meaning, 
	\[
	(\vu, \Theta) \in \omega[\vu_0, \Theta_0] \ \Rightarrow\ 
	(\vu, \Theta)(\cdot + T) \in \omega[\vu_0, \Theta_0]
	\]
	for any $T \in R$;
	\item 
	$\mathcal{A}$ is a compact metric space with respect to the topology 
	\[
	(\vu, \Theta) \in C_{\rm loc}(R; L^2(\Omega; R^d)-{\rm weak} \times L^2(\Omega)).
	\]
\end{itemize}
Note carefully that the topology $C_{\rm weak, loc}(R; L^2(\Omega; R^d))$ is metrizable as the velocities in $\mathcal{A}$ belong to a bounded set in $L^2(\Omega; R^d)$.

As a corollary of Theorem \ref{TB1}, we deduce the existence of a uniform bounded 
absorbing set in the $L^2-$norm. 

\begin{mdframed}[style=MyFrame]

\begin{Corollary}[{\bf Bounded absorbing set}] \label{CC1}
	
	Under the hypotheses of Theorem \ref{TB1}, there exists a universal constant 
	$\mathcal{E}_\infty = \mathcal{E}_{\infty}(\rm data)$ such that 
	\[
	\limsup_{t \to \infty} \left(  \| \vu(t, \cdot) \|_{L^2(\Omega; R^d)} 
	+ \| \Theta (t, \cdot) \|_{L^2(\Omega)} \right) \leq \mathcal{E}_\infty 
	\]
	for any weak solution of the OB system defined on $[T, \infty)$ starting from the initial data 
	\[
	\vu(T, \cdot) \in L^2(\Omega; R^d),\ \Theta(T, \cdot) \in L^\infty(\Omega) \cap W^{2,q}(\Omega), 
	\]
	for suitable $q>1$. 
	\end{Corollary}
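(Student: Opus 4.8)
The plan is to read the $L^2$ absorbing set off the Levinson dissipativity already contained in Theorem \ref{TB1}, first controlling the temperature and then bootstrapping to the velocity through the energy inequality. The whole statement reduces to a universal bound on $\limsup_{t \to \infty} \| \Theta(t, \cdot) \|_{L^2(\Omega)}$. Indeed, suppose such a bound $C_\Theta = C_\Theta(\data)$ is available. The energy inequality \eqref{w7} gives, for a.a. $0 < s < t$,
\[
\tfrac12 \| \vu(t, \cdot) \|_{L^2}^2 + \mu \int_s^t \| \Grad \vu \|_{L^2}^2 \, {\rm d}r \le \tfrac12 \| \vu(s, \cdot) \|_{L^2}^2 - \int_s^t \intO{ \Theta \Grad G \cdot \vu } \, {\rm d}r .
\]
Estimating the buoyancy forcing by $\| \Grad G \|_{L^\infty} \| \Theta \|_{L^2} \| \vu \|_{L^2}$ and invoking the Poincar\'e inequality $\| \Grad \vu \|_{L^2}^2 \ge C_P^{-1} \| \vu \|_{L^2}^2$ (legitimate as $\vu$ vanishes on $\partial \Omega$), a routine manipulation produces the differential inequality $\Dt \| \vu \|_{L^2} + \mu C_P^{-1} \| \vu \|_{L^2} \le \| \Grad G \|_{L^\infty} \| \Theta \|_{L^2}$, whence Gronwall's lemma yields $\limsup_{t \to \infty} \| \vu(t, \cdot) \|_{L^2} \le (C_P/\mu) \| \Grad G \|_{L^\infty} C_\Theta$. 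Summing the two limsups defines the universal constant $\mathcal{E}_\infty$.

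To obtain the temperature bound I would argue by contradiction, exploiting the compactness furnished by Theorem \ref{TB1}. Set $C_\Theta := |\Omega|^{1/2} \mathcal{F}_\infty$, where $\mathcal{F}_\infty = \mathcal{F}_\infty(\data)$ is the universal $L^\infty$ constant in \eqref{B4}. If the claim failed for some solution on $[T, \infty)$, there would be times $\sigma_k \to \infty$ with $\| \Theta(\sigma_k, \cdot) \|_{L^2} > C_\Theta + \delta$ for some $\delta > 0$. The time--shifted pairs $t \mapsto (\vu(t + \sigma_k, \cdot), \Theta(t + \sigma_k, \cdot))$ are again solutions, now living on $[T - \sigma_k, \infty)$ with $T - \sigma_k \to -\infty$, and at their receding initial times they all carry the one fixed datum $(\vu(T, \cdot), \Theta(T, \cdot))$; since $\Theta(T, \cdot) \in L^\infty(\Omega)$ by hypothesis, the uniform bound \eqref{B2b} holds with $\mathcal{F}_0 = \| \vu(T, \cdot) \|_{L^2(\Omega; R^d)} + \| \Theta(T, \cdot) \|_{L^\infty(\Omega)} < \infty$. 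Theorem \ref{TB1} then supplies a subsequence converging, in the sense of \eqref{B3}, to a global solution $(\vu_\infty, \Theta_\infty)$ obeying \eqref{B4}; in particular $\| \Theta_\infty(0, \cdot) \|_{L^2} \le |\Omega|^{1/2} \| \Theta_\infty(0, \cdot) \|_{L^\infty} \le C_\Theta$.

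The decisive point is that the temperatures in \eqref{B3} converge \emph{strongly}, in $C_{\rm loc}(R; L^2(\Omega))$: evaluating at $t = 0$ gives $\Theta(\sigma_k, \cdot) \to \Theta_\infty(0, \cdot)$ in $L^2(\Omega)$, hence $\| \Theta(\sigma_k, \cdot) \|_{L^2} \to \| \Theta_\infty(0, \cdot) \|_{L^2} \le C_\Theta$, contradicting $\| \Theta(\sigma_k, \cdot) \|_{L^2} > C_\Theta + \delta$. I expect this to be where the only genuine difficulty lies: the velocities in \eqref{B3} converge merely weakly (in $C_{\rm weak, loc}$), so mass could be lost in the limit and the same contradiction scheme cannot bound $\| \vu(\sigma_k, \cdot) \|_{L^2}$ directly -- this is exactly why the velocity estimate must be recovered a posteriori from the temperature bound via the energy inequality, as in the first step. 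That all constants are functions of the data alone, independent of the ``initial energy'' $\mathcal{F}_0$, is guaranteed throughout by Remark \ref{RT1}.
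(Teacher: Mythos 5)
Your proposal is correct, but it follows a genuinely different route from the paper's proof. The paper treats velocity and temperature \emph{together}: it fixes a constant $A$ bounding the attractor $\mathcal{A}$ in $L^2$, then uses Theorem \ref{TB1} to show that the solution approaches $\mathcal{A}$ in the \emph{time-averaged} sense $\int_{T_n}^{T_n+1}\big(\|\vu-\tvu_n\|_{L^2}+\|\Theta-\widetilde{\Theta}_n\|_{L^2}\big)\dt\to 0$ --- in this integrated sense the velocities do converge strongly, by interpolating the $C_{\rm weak,loc}(R;L^2)$ convergence (upgraded to $C_{\rm loc}(R;W^{-1,2})$ by compact embedding) against the weak $L^2_{\rm loc}(R;W^{1,2}_0)$ bound --- so that every unit time interval $[N,N+1]$ with $N$ large contains a ``good'' time at which the full $L^2$ norm is at most $A$; it then bridges between good times using the at-most-exponential growth of the energy norms furnished by \eqref{w7}, \eqref{w15}. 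You instead decouple the two unknowns: a contradiction argument based solely on the pointwise-in-time strong convergence $\Theta_n\to\Theta$ in $C_{\rm loc}(R;L^2(\Omega))$ from \eqref{B3} gives the temperature bound, and the velocity bound is recovered afterwards from the energy inequality \eqref{w7} plus Poincar\'e via Gronwall. Your diagnosis that the merely weak velocity convergence yields only lower semicontinuity of the norm --- the wrong direction for a contradiction --- identifies exactly the obstruction: the paper circumvents it by time-averaging (which restores strong compactness of the velocities), while you circumvent it through the dissipative energy structure. Each approach buys something: the paper's is symmetric in the two components and avoids any contradiction scheme; yours yields an explicit constant for the velocity part, $\limsup_{t\to\infty}\|\vu(t,\cdot)\|_{L^2}\le (C_P/\mu)\,\|\Grad G\|_{L^\infty}\,|\Omega|^{1/2}\mathcal{F}_\infty$, and needs neither the time-averaged strong convergence of velocities nor the exponential-growth interpolation step. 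The only points you should flesh out are routine: the passage from the weak-in-time energy inequality \eqref{w7} to the differential inequality for $\|\vu\|_{L^2}$ (valid from a.a.\ starting times, standard for Leray--Hopf-type solutions, including the division by $\|\vu\|_{L^2}$ where it vanishes), and the observation that time shifts of weak solutions are again weak solutions, so that \eqref{B2b} indeed holds for your shifted sequence with the fixed datum at the receding initial times.
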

	
\end{mdframed}

\begin{proof}
	
	First, fix a constant $A$ in such a way that 
\[
\sup_{t \in R} \left( \| \tvu (t, \cdot) \|_{L^2(\Omega; R^2)} + 
\| \widetilde{{\Theta}} \|_{L^2(\Omega)} \right) \leq \frac{A}{2}
\]	
for any $(\tvu, \widetilde{\Theta})$ belonging to the attractor $\mathcal{A}$. It follows from 
Theorem \ref{TB1} that any sequence of times $T_n \to \infty$ contains a subsequence such that 
\[ 
\int_{T_n}^{T_n + 1} \left( \| \vu(t, \cdot) - \tvu_n(t, \cdot) \|_{L^2(\Omega; R^d)} 
+ \| \Theta(t, \cdot) - \widetilde{\Theta}_n(t, \cdot) \|_{L^2(\Omega)} \right) \dt \to 0 
\]
as $T_n \to \infty$ for some $(\tvu_n, \widetilde{\Theta}_n) \in \mathcal{A}$.
We deduce there exists $T > 0$ such that any time interval $[N, N + 1]$, 
$N \geq T$ contains a point $\tau_n$ such that 
\[
\left(  \| \vu(\tau_n, \cdot) \|_{L^2(\Omega; R^d)} 
+ \| \Theta (\tau_n, \cdot) \|_{L^2(\Omega)} \right) \leq A. 
\] 

Finally, the energy inequalities \eqref{w7}, \eqref{w15} imply that 
\[
t \mapsto \left(  \| \vu(t, \cdot) \|_{L^2(\Omega; R^d)} 
+ \| \Theta (t, \cdot) \|_{L^2(\Omega)} \right) 
\]
grows at most exponentially in $t$; whence the desired result follows.
	
\end{proof}

\section{Stationary statistical solutions}
\label{B2}

Given the initial data $(\vu_0, \Theta_0)$, we may define the associated 
$\omega-$limit set, 
\begin{align} 
\omega[\vu_0, \Theta_0] = &\left\{ (\vu, \Theta) \ \Big| 
(\vu, \Theta) \ \mbox{a solution of the OB system in}\ R \times \Omega\right. \br &\left.  
\mbox{obtained as a limit \eqref{B3} with} 
\ \vun(T_n, \cdot) = \vu_0,\ \Theta(T_n, \cdot) = \Theta_0     \right\}.
\label{B5}	
\end{align}	

Given $(\vu_0, \Theta_0)$, Theorem \ref{TB1} yields the following properties 
of the set $\omega[\vu_0, \Theta_0]$:
\begin{itemize}
	\item 
	$\omega[\vu_0, \Theta_0]$ is non--empty; 
	\item
	$\omega[\vu_0, \Theta_0]$ is time--shift invariant, meaning, 
	\[
	(\vu, \Theta) \in \omega[\vu_0, \Theta_0] \ \Rightarrow\ 
	(\vu, \Theta)(\cdot + T) \in \omega[\vu_0, \Theta_0]
	\]
	for any $T \in R$;
	\item 
	$\omega[\vu_0, \Theta_0] \subset \mathcal{A}$ is a compact metric space with respect to the topology 
	\[
(\vu, \Theta) \in C_{\rm loc}(R; L^2(\Omega; R^d)-{\rm weak} \times L^2(\Omega))	.
	\]
	\end{itemize}
	
Now, exactly as in \cite{FanFeiHof}, we may apply the standard
Krylov--Bogolyubov theory to obtain the existence of a stationary statistical solution, meaning, 
a Borel probability measure $\mathcal{M}[\vu_0, \Theta_0]$ supported 
on 	$\omega[\vu_0, \Theta_0]$ such that 
\begin{equation} \label{B6}
	\mathcal{M}[\vu_0, \Theta_0][\mathfrak{B}] = 
	\mathcal{M}[\vu_0, \Theta_0][\mathfrak{B}(\cdot + T)]
\end{equation}
for any Borel set $\mathfrak{B} \subset \omega[\vu_0, \Theta_0]$.

Finally, by the Birkhoff--Khinchin theorem, we obtain the convergence of ergodic averages 
\begin{equation} \label{B7}
  \frac{1}{T} \int_0^T F(\vu, \Theta) (t, \cdot) \ \dt\  \to \Ov{F} \
 \ \mathcal{M}[\vu_0, \Theta_0] - \mbox{a.s.} 
\end{equation}	
whenever $F: W^{-1,2}_0(\Omega) \times L^2(\Omega) \to R$ is bounded, 
Borel measurable, and satisfies 
\[
{\rm exp}_{\mathcal{M}} \left[ F (\vu, \Theta)(0, \cdot) \right] < \infty. 
\]
where ${\rm exp}_{\mathcal{M}}$ denote the expected value.

Summarizing the previous discussion, we obtain:

\begin{mdframed}[style=MyFrame]
	
	\begin{Theorem}[{\bf Stationary statistical solution}] \label{TS1}
		Under the hypotheses of Theorem \ref{UT1}, the OB system admits a
		stationary statistical solution $\mathcal{M}[\vu_0, \Theta_0]$ supported on any $\omega-$limit set 
		$\omega[\vu_0, \Theta_0]$. Moreover, the ergodic averages 
		\[
 \frac{1}{T} \int_0^T F(\vu, \Theta) (t, \cdot) \ \dt\  \to \Ov{F} \ \mbox{as}\ T \to \infty \ 
\ \mathcal{M}[\vu_0, \Theta_0] - \mbox{a.s.} 
\]	
whenever $F: W^{-1,2}_0(\Omega) \times L^2(\Omega) \to R$ is bounded, Borel measurable, and 
satisfies
\[
{\rm exp}_{\mathcal{M}} \left[ F (\vu, \Theta)(0, \cdot) \right] < \infty. 
\]

		\end{Theorem}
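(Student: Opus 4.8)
The plan is to recast the family of entire trajectories as a compact metric dynamical system under the time shift and then invoke the classical Krylov--Bogolyubov and Birkhoff--Khinchin theorems. First I would record the structural facts already supplied by Theorem \ref{TB1}: the set $\omega[\vu_0, \Theta_0]$ is non--empty, time--shift invariant, and a compact metric space for the topology $C_{\rm loc}(R; L^2(\Omega; R^d)\text{-weak} \times L^2(\Omega))$. Writing $\{S_\tau\}_{\tau \geq 0}$ for the shift semigroup $S_\tau (\vu, \Theta) = (\vu, \Theta)(\cdot + \tau)$, I would observe that a uniform time shift of a locally convergent trajectory is again locally convergent, so each $S_\tau$ acts continuously on $\omega[\vu_0, \Theta_0]$ and the orbit map $\tau \mapsto S_\tau(\vu, \Theta)$ is continuous; this furnishes a continuous semiflow on a compact metric space, which is the only structure the abstract theorems require.

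The existence of the invariant measure then follows from Krylov--Bogolyubov exactly as in \cite{FanFeiHof}. Concretely, I would fix any $(\vu, \Theta) \in \omega[\vu_0, \Theta_0]$, form the time--averaged probability measures $\mu_T = \frac{1}{T}\int_0^T \delta_{S_\tau (\vu, \Theta)} \, d\tau$ on the compact space $\omega[\vu_0, \Theta_0]$, and extract via Prokhorov a weakly-(*) convergent subsequence $\mu_{T_k} \to \mathcal{M}[\vu_0, \Theta_0]$. The elementary estimate $\mu_T(g \circ S_s) - \mu_T(g) = O(s/T)$ for $g \in C(\omega[\vu_0, \Theta_0])$ shows the limit is shift--invariant, which is precisely \eqref{B6}, and its support lies in $\omega[\vu_0, \Theta_0]$ by construction.

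For the ergodic averages I would apply the Birkhoff--Khinchin theorem to the measure--preserving system $(\omega[\vu_0, \Theta_0], \mathcal{M}[\vu_0, \Theta_0], \{S_\tau\})$. The key point is that the evaluation at time zero, $(\vu, \Theta) \mapsto (\vu, \Theta)(0, \cdot)$, maps continuously into $W^{-1,2}_0(\Omega) \times L^2(\Omega)$: along the defining limits \eqref{B3} the temperature converges strongly in $L^2$, while the weak-$L^2$ convergence of the velocity upgrades to strong convergence in $W^{-1,2}$ through the compact embedding $L^2(\Omega) \hookrightarrow\hookrightarrow W^{-1,2}(\Omega)$. Hence $G \equiv F \circ {\rm ev}_0$ is bounded and Borel measurable on the trajectory space, so $\mathcal{M}$--integrable under the stated hypotheses, and $G(S_\tau(\vu, \Theta)) = F((\vu, \Theta)(\tau, \cdot))$. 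Birkhoff--Khinchin then delivers the $\mathcal{M}$--a.s. convergence of $\frac{1}{T}\int_0^T G(S_\tau \cdot)\, d\tau$ to the conditional expectation $\overline{F}$ relative to the shift--invariant $\sigma$--algebra, which is exactly the asserted conclusion.

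The main obstacle I anticipate is not the soft ergodic machinery but the verification that the dynamical system is genuinely well posed and closed: that the shift of a trajectory in $\omega[\vu_0, \Theta_0]$ remains a bona fide weak solution in the same set, and that the chosen topology renders both the shift and the time--zero evaluation continuous into the target space of $F$. This rests on the strong--in--time continuity of $\Theta$ obtained from the higher regularity \eqref{w17}, \eqref{w18} and the energy balance \eqref{w15}, combined with the compactness already established in Theorem \ref{TB1}. Once these continuity and closedness properties are in place, Krylov--Bogolyubov and Birkhoff--Khinchin apply verbatim.
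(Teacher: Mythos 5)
Your proposal is correct and follows essentially the same route as the paper: the paper likewise takes the non-emptiness, shift invariance, and compactness of $\omega[\vu_0,\Theta_0]$ in the topology $C_{\rm loc}(R; L^2(\Omega;R^d)\text{-weak} \times L^2(\Omega))$ from Theorem \ref{TB1}, then invokes the Krylov--Bogolyubov theory (citing \cite{FanFeiHof}) to produce the shift-invariant measure \eqref{B6}, and the Birkhoff--Khinchin theorem for the convergence of ergodic averages \eqref{B7}. Your write-up merely makes explicit the standard details that the paper outsources to the citation (time-averaged measures and Prokhorov, the $O(s/T)$ invariance estimate, and continuity of the time-zero evaluation via the compact embedding of $L^2(\Omega)$ into $W^{-1,2}(\Omega)$), which is a faithful expansion rather than a different argument.
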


	\end{mdframed}

\section{Convergence to equilibria}
\label{C}	

Equilibrium solutions $(\vu_s, \Theta_s)$ are stationary solutions of the (OB) system, 
\begin{align}
	\Div \vu_s &= 0, \br 
	\Div (\vu_s \otimes \vu_s) + \Grad \Pi &= \mu \Del \vu_s - \Theta_s \Grad G
	\br 
	\vu_s \cdot \Grad \Theta_s &= \kappa \Del \Theta_s,	
	\label{es}
\end{align}
with the boundary conditions 
\begin{equation} \label{bes}
\vu_s|_{\partial \Omega} = 0,\ 
\Theta_s|_{\partial \Omega} = \vtB - \alpha \avintO{ \Theta_s }.
\end{equation}
Equivalently, we may rewrite the system in terms of 
\[
\vu_s, \mathcal{T}_s = \Theta_s + \alpha \avintO{ \Theta_s } - \vtB;
\] 
 namely, we have 
\begin{align} 
	\Div \vu_s &= 0, \br
	\Div (\vu_s \otimes \vu_s) + \Grad \Pi &= \mu \Del \vu_s - \mathcal{T}_s \Grad G
	- \vtB \Grad G, \br 
	\vu_s \cdot \big( \Grad \mathcal{T}_s + \Grad \vtB \big)
	&= \kappa \Del \mathcal{T}_s,\ \mathcal{T}_s|_{\partial \Omega} = 0.
	\label{C4}
\end{align}

\subsection{Bounds on equilibrium solutions}
\label{BSS}

Obviously, any equilibrium solution belongs to the global attractor $\mathcal{A}$. In particular, 
\begin{equation} \label{BB1}
\| \vu_s \|_{W^{1,2}_0(\Omega; R^d)} + \| \Theta_s \|_{L^\infty \cap W^{1,2}(\Omega)} \aleq \mathcal{E}_\infty, 	
	\end{equation}
where $\mathcal{E}_\infty$ is the  universal constant depending only on the data, see Remark \ref{RT1}. 
Consequently, one can bootstrap the standard elliptic estimates for the Laplace  
and Stokes operators to conclude that any stationary solution is strong 
as far as we additionally assume, say
\begin{equation} \label{BB2}
\Grad G , \Grad \vtB \in L^\infty(\Omega; R^d) \, \text{ which imply }\,\, 
\vu_s \in W^{2,q}(\Omega; R^d),\ \Theta_s \in W^{2.q}(\Omega),\ 1 \leq q < \infty,  
\end{equation}
at least if $\partial \Omega$ is of class $C^2$ as required by the elliptic $L^p-$theory.

\subsection{Relative energy inequality}

As the next step, we derive the relative energy inequalities for the velocity and temperature.
First, we rewrite the energy inequality \eqref{w7} in terms of $\mathcal{T}$:
\begin{equation} \label{C1}
	\frac{\D }{\dt} \intO{ \frac{1}{2} |\vu|^2 } + 
	\mu \intO{ |\Grad \vu |^2 } \leq - \intO{ \mathcal{T} \Grad G \cdot \vu }
	- \intO{ \vtB \Grad G \cdot \vu }.
	\end{equation}

Now, for $\tvu$ sufficiently smooth and satifying 
\[
\Div \tvu = 0,\ \tvu|_{\partial \Omega} = 0,
\]
we deduce 
\begin{align} 
\frac{\D }{\dt} \intO{ \frac{1}{2} |\vu - \tvu |^2 } &+ \mu \intO{ \Grad \vu \cdot (\Grad \vu - \Grad \tvu)  }\br &\leq - \intO{ \mathcal{T} \Grad G \cdot (\vu - \tvu) } - \intO{ \vtB \Grad G \cdot (\vu - \tvu) } \br & 
+ \intO{ \big( (\tvu - \vu) \cdot \partial_t \tvu - (\vu \otimes \vu) : \Grad \tvu \big) }.
\label{C2}
	\end{align}

Similarly, we may rewrite the ``thermal'' energy balance as
\begin{align}
	\frac{\D}{\dt} &\intO{  \frac{1}{2} \Lambda \big(\mathcal{T} - \wmT \big)
	\big( \mathcal{T} - \wmT \big)  
	  } + 
	\kappa \intO{ \Grad \mathcal{T} \cdot \big(\Grad \mathcal{T} - \Grad \wmT \big)  } \br 
	&= \intO{ \vtB \vu \cdot \big( \Grad \mathcal{T} - \Grad \wmT \big)} 
	+ \intO{ \big( \Lambda (\wmT ) - \Lambda (\mathcal{T}) \big) \partial_t \wmT } \br
&	 
+	\intO{ \mathcal{T} \vu \cdot \big( \Grad \mathcal{T} - \Grad \wmT \big) }
\label{C3}
\end{align}
for any sufficiently smooth $\wmT$, $\wmT|_{\partial \Omega} = 0$.

\subsection{Stability of equilibrium solutions}

Consider the special case of the relative energy inequality, where 
$(\tvu, \wmT) = (\vu_s, \mathcal{T}_s)$ is an equilibrium solution satisfying \eqref{C4}.

A straightforward manipulation yields
\begin{align}
	\frac{\D }{\dt} \intO{ \frac{1}{2} |\vu - \vu_s |^2 }
	& + \mu \intO{ \Grad \vu \cdot \big(\Grad \vu - \Grad \vu_s\big)  }\br
	&\leq - \intO{ \mathcal{T} \Grad G \cdot (\vu - \vu_s) } - \intO{ \vtB \Grad G \cdot (\vu - \vu_s) } \br & 
	- \intO{  (\vu \otimes \vu) : \Grad \vu_s  }, 
\nonumber	
	\end{align}
where 
\[
 \intO{ (\vu_s \otimes \vu_s) : \Grad \vu } +
\mu \intO{ \Grad \vu_s \cdot \Grad (\vu_s - \vu) } 
= \intO{ \big( \mathcal{T}_s + \vtB \big) \Grad G \cdot (\vu - \vu_s) }.
\]	
Consequently, 
\begin{align}
	\frac{\D }{\dt} \intO{ \frac{1}{2} |\vu - \vu_s |^2 } &
	+ \mu \intO{ \big| \Grad \vu - \Grad \vu_s \big|^2 }\br
	&\leq - \intO{ \big( \mathcal{T} - 
		\mathcal{T}_s \big) \Grad G \cdot (\vu - \vu_s) }  \br & 
	- \intO{  (\vu \otimes \vu) : \Grad \vu_s  }  -
	\intO{ (\vu_s \otimes \vu_s) : \Grad \vu }, 
	\nonumber
\end{align}
where 
\begin{align*}
  \intO{  \Big[  (\vu \otimes \vu) : \Grad \vu_s + (\vu_s \otimes \vu_s) : \Grad \vu \Big] } 
   & = \intO{ \Grad \vu_s : \big( \vu \otimes (\vu - \vu_s)\big) } \\
   & = \intO{ \Grad \vu_s : \big((\vu - \vu_s ) \otimes (\vu - \vu_s)\big) }. 
\end{align*}
Thus we conclude
\begin{align}
	& \frac{\D }{\dt} \intO{ \frac{1}{2} |\vu - \vu_s |^2 } + \mu \intO{ \big| \Grad \vu - \Grad \vu_s \big|^2 }\br 
	& \qquad\quad \leq - \intO{ \big( \mathcal{T} - 
		\mathcal{T}_s \big) \Grad G \cdot (\vu - \vu_s) } 
		- \intO{ \Grad \vu_s : \big((\vu - \vu_s ) \otimes (\vu - \vu_s)\big) } .
	\label{C5}
\end{align}

Similarly, using \eqref{C3} we get 
\begin{align}
	\frac{\D}{\dt} &\intO{  \frac{1}{2} \Lambda \big(\mathcal{T} - \mathcal{T}_s \big) 
	     \big( \mathcal{T} - \mathcal{T}_s \big) } 
	+ \kappa \intO{ \Grad \mathcal{T} \cdot \big(\Grad \mathcal{T} - \Grad \mathcal{T}_s \big)  } \br 
	&= \intO{ \vtB \vu \cdot \big( \Grad \mathcal{T} - \Grad \mathcal{T}_s \big)}  \br
	&	+	\intO{ \mathcal{T} \vu \cdot \big( \Grad \mathcal{T} - \Grad \mathcal{T}_s \big) },
	\nonumber
\end{align}
and 
\[
\kappa \intO{ \Grad \mathcal{T}_s \cdot \Grad \big( \mathcal{T}  - \mathcal{T}_s \big)} 
= \intO{ \vu_s \cdot \big(\Grad \mathcal{T}_s + \Grad \vtB \big) \big( \mathcal{T}_s - \mathcal{T} \big)}.  
\]
Consequently, 
\begin{align}
	\frac{\D}{\dt} &\intO{  \frac{1}{2} \Lambda \big(\mathcal{T} - \mathcal{T}_s \big) 
	\big( \mathcal{T} - \mathcal{T}_s \big)  
	} + 
	\kappa \intO{ \big| \Grad \mathcal{T} - \Grad \mathcal{T}_s \big|^2  } \br 
	&= \intO{ \vtB (\vu - \vu_s) \cdot \big( \Grad \mathcal{T} - \Grad \mathcal{T}_s \big)}  \br
	&	 
	+	\intO{ \mathcal{T}_s (\vu - \vu_s) \cdot \big( \Grad \mathcal{T} - \Grad \mathcal{T}_s \big) }.
	\nonumber
\end{align}
Thus we may infer that 
\begin{align}
	\frac{\D}{\dt} &\intO{  \frac{1}{2} \Lambda \big(\mathcal{T} - \mathcal{T}_s \big)
	   \big( \mathcal{T} - \mathcal{T}_s \big) } 
	   + \kappa \intO{ \big| \Grad \mathcal{T} - \Grad \mathcal{T}_s \big|^2  } \br &=
	 \intO{  \big(  \mathcal{T} - \mathcal{T}_s \big) \Grad \big( \vtB + \mathcal{T}_s \big) \cdot (\vu_s - \vu) }  .
	\label{C6}
\end{align}

Summing up \eqref{C5}, \eqref{C6} we conclude 
\begin{align} 
		\frac{1}{2}\frac{\D }{\dt} &\intO{ \left(  |\vu - \vu_s |^2 
		+  \Lambda \big(\mathcal{T} - \mathcal{T}_s \big) \big( \mathcal{T} - \mathcal{T}_s \big) \right) } 
		+ \intO{ \left( \mu \big| \Grad \vu - \Grad \vu_s \big|^2 
		  + \kappa \big| \Grad \mathcal{T} - \Grad \mathcal{T}_s \big|^2\right)}\br 
		  &\leq - \intO{ \big( \mathcal{T} - \mathcal{T}_s \big) \Grad G \cdot (\vu - \vu_s) } - \intO{ 
		\Grad \vu_s : \big( (\vu - \vu_s) \otimes (\vu - \vu_s) \big)} \br
		&\quad +  \intO{ \big(  \mathcal{T} - \mathcal{T}_s \big)
		     \Grad \big( \vtB + \mathcal{T}_s \big) \cdot (\vu_s - \vu) }  .
	\label{C7}
	\end{align}

It follows from \eqref{C7} that any global weak solution converges to an equilibrium as long as the quantities 
\[
\| \Grad G \|_{L^\infty(\Omega; R^d)}, 
\ \| \Grad \vu_s \|_{L^\infty(\Omega; R^{d \times d})},\ 
\| \Grad (\vtB + \mathcal{T}_s ) \| _{L^\infty(\Omega; R^d)} 
\]
are sufficiently small. As observed in Section \ref{BSS}, the stationary solutions are small as long as the the data $\Grad G$, $\Grad \vtB$ are small. 
Consequently, we have proved the following result.

\begin{mdframed}[style=MyFrame]

\begin{Theorem}[{\bf Stability of equilibria}] \label{TC1}
	
	Under the hypotheses of Theorem \ref{TB1}, there exists $\delta > 0$ depending solely on the
	Poincar\' e constant of the domain $\Omega$ and 
	the diffusion coefficients $\mu$, $\kappa$, and 
	$\max_{\partial \Omega}|\vtB|$ such that for any data $G$, $\vtB$, 
	\[
	\| \Grad \vtB \|_{L^\infty(\Omega; R^d)} + \| \Grad G \|_{L^\infty(\Omega; R^d)} < \delta, 
	\]
	the OB system admits a unique stationary solution $(\vu_s, \Theta_s)$ that is globally exponentially stable in the $L^2(\Omega)-$norm, meaning 
	\[ 
	\| \vu(t, \cdot) - \vu_s \|_{L^2(\Omega; R^d)} + 
	\| \Theta(t, \cdot) - \Theta_s \|_{L^2(\Omega)} \leq C 	\Big[ \| \vu(0, \cdot) - \vu_s \|_{L^2(\Omega; R^d)} + 
	\| \Theta(0, \cdot) - \Theta_s \|_{L^2(\Omega)} \Big] \exp (-K t) 
\]
for some absolute constants $C$ and $K > 0$ and any solution $(\vu, \Theta)$ of the OB system.
	
	\end{Theorem}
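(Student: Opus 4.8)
The plan is to treat the theorem in three stages built on the relative energy inequality \eqref{C7}: construct a small equilibrium, show it is the only one, and then read off exponential decay by a Poincar\'e--Gr\"onwall argument.

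First I would construct the equilibrium by a contraction argument for the stationary system \eqref{C4}. Given a pair $(\vc{w}, S)$ in a small ball of $W^{2,q}(\Omega;R^d) \times W^{2,q}(\Omega)$, I solve the \emph{linear} Stokes system $\mu \Del \vu_s = \Grad \Pi + \Div(\vc{w} \otimes \vc{w}) + (S + \vtB)\Grad G$, $\Div \vu_s = 0$, $\vu_s|_{\partial\Omega} = 0$, and the linear elliptic problem $\kappa \Del \mathcal{T}_s = \vc{w} \cdot (\Grad S + \Grad \vtB)$, $\mathcal{T}_s|_{\partial\Omega} = 0$. The $L^q$-theory for the Stokes and Laplace operators on the $C^2$ domain $\Omega$, together with the embedding $W^{2,q} \hookrightarrow W^{1,\infty}$ for $q > d$, shows that this map is a contraction on a ball of radius $O(\|\Grad G\|_{L^\infty} + \|\Grad \vtB\|_{L^\infty})$ once the latter is below a threshold $\delta$ depending only on $\mu$, $\kappa$, the Poincar\'e/Sobolev constants of $\Omega$ and $\max_{\partial\Omega}|\vtB|$. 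Its unique fixed point $(\vu_s, \mathcal{T}_s)$ is the desired equilibrium and satisfies in particular $\|\Grad \vu_s\|_{L^\infty} + \|\Grad(\vtB + \mathcal{T}_s)\|_{L^\infty} \leq C\delta$.

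For uniqueness, I note that any two equilibria are in particular time-independent weak solutions, so I may use one as $(\vu,\Theta)$ and the other as the reference $(\vu_s,\mathcal{T}_s)$ in \eqref{C7}. The left-hand time derivative then vanishes, and Cauchy--Schwarz applied to the right-hand side, using the $L^\infty$-smallness of $\Grad G$, $\Grad \vu_s$ and $\Grad(\vtB + \mathcal{T}_s)$ together with the Poincar\'e inequality on $W^{1,2}_0$ (both differences vanish on $\partial\Omega$), yields $\mu\|\Grad(\vu - \vu_s)\|_{L^2}^2 + \kappa\|\Grad(\mathcal{T} - \mathcal{T}_s)\|_{L^2}^2 \leq C\delta\big(\|\Grad(\vu - \vu_s)\|_{L^2}^2 + \|\Grad(\mathcal{T} - \mathcal{T}_s)\|_{L^2}^2\big)$; for $\delta$ small the right-hand side is absorbed and the two equilibria coincide. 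For the exponential stability I set $E(t) = \frac12\intO{ |\vu - \vu_s|^2 + \Lambda(\mathcal{T} - \mathcal{T}_s)(\mathcal{T} - \mathcal{T}_s) }$. Since $\Lambda$ is strictly positive, bounded and self-adjoint, $E$ is comparable to $\frac12(\|\vu - \vu_s\|_{L^2}^2 + \|\mathcal{T} - \mathcal{T}_s\|_{L^2}^2)$, and as $\mathcal{T} - \mathcal{T}_s = (\Theta - \Theta_s) + \alpha\avintO{\Theta - \Theta_s}$ is a bounded invertible transform of $\Theta - \Theta_s$ (with inverse $\Lambda$), also to $\frac12(\|\vu - \vu_s\|_{L^2}^2 + \|\Theta - \Theta_s\|_{L^2}^2)$. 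Bounding each term on the right of \eqref{C7} by the product of the relevant small $L^\infty$-norm and two factors of the differences, and converting $L^2$-norms of the differences into $L^2$-norms of their gradients by Poincar\'e, I get the right-hand side $\leq C\delta\, D(t)$ with $D(t) = \mu\|\Grad(\vu - \vu_s)\|_{L^2}^2 + \kappa\|\Grad(\mathcal{T} - \mathcal{T}_s)\|_{L^2}^2$. Choosing $\delta$ small absorbs half of $D$, leaving $\frac{\D }{\dt} E + \tfrac12 D \leq 0$; a further application of Poincar\'e gives $D \geq c E$, whence $\frac{\D }{\dt} E + \tfrac{c}{2} E \leq 0$ and Gr\"onwall's lemma furnishes the claimed exponential bound after passing back to the $L^2$-norms of $\vu - \vu_s$ and $\Theta - \Theta_s$.

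The hard part will be the quantitative construction in the first stage: one must verify that the threshold $\delta$ and the resulting smallness of $\|\Grad \vu_s\|_{L^\infty}$ and $\|\Grad(\vtB + \mathcal{T}_s)\|_{L^\infty}$ depend only on the quantities listed in the statement and not on any a priori bound on the solution. The quadratic term $\Div(\vu_s \otimes \vu_s)$ in the Stokes step and the transport term $\vu_s \cdot \Grad \vtB$ in the temperature step must be shown to keep the map contractive on a ball whose radius scales linearly with the data; the quantity $\max_{\partial\Omega}|\vtB|$ enters because $\vtB$ itself, not only its gradient, appears in the forcing through $(\mathcal{T}_s + \vtB)\Grad G$. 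Once this is secured, the absorption arguments of the second and third stages are routine.
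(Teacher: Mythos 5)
Your proposal is correct, and its stability core --- absorbing the right-hand side of \eqref{C7} by Cauchy--Schwarz and the Poincar\'e inequality, then applying Gr\"onwall and using that $\Lambda$ is strictly positive, bounded, self-adjoint, and inverts the map $Z \mapsto Z + \alpha \avintO{Z}$ --- is precisely the argument the paper compresses into the single sentence following \eqref{C7}. Where you genuinely diverge is in how the small equilibrium is obtained. The paper does not run a fixed-point scheme on \eqref{C4}: it observes in Section \ref{BSS} that every equilibrium belongs to the trajectory attractor $\mathcal{A}$ of \eqref{attra}, hence inherits the universal bound \eqref{BB1} furnished by Theorem \ref{TB1}, and then bootstraps elliptic $L^p$ estimates for the Stokes and Laplace operators to conclude that all stationary solutions are strong ($W^{2,q}$, hence $W^{1,\infty}$) and become small once $\Grad G$ and $\Grad \vtB$ are small. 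That route makes \emph{every} equilibrium small simultaneously, so uniqueness drops out of the decay estimate with no extra work, but it leaves the existence of at least one equilibrium implicit (treated as standard). Your contraction argument buys the complementary strengths: explicit existence with quantitative smallness of the fixed point, at the price of a separate uniqueness step --- which you set up correctly, exploiting that the right-hand side of \eqref{C7} involves gradients only of the \emph{reference} pair $(\vu_s, \mathcal{T}_s)$, so taking the constructed small equilibrium as reference rules out arbitrary, possibly large, competing equilibria (these do satisfy the energy inequalities needed for \eqref{C7}, being by definition time-independent weak solutions, and regular by the paper's bootstrap). Both routes are valid; yours is more self-contained, while the paper's recycles the Levinson dissipativity it has already established.
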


\end{mdframed}

\subsection{Convergence for the classical B\' enard problem with 
temperature gradient aligned to gravitation}

In the standard setting of the Rayleigh--B\' enard problem, see e.g. \cite{BoPeAh}, 
the temperature gradient is parallel to the gravitational force, 	 
\[
\Grad \vtB \times \Grad G = 0.
\]
Then $\vu_s = 0$, and $\Theta_s = \vtB$ (harmonic extension of the boundary data) 
is the corresponding stationary solution. 
Revisiting formulae \eqref{C5}, \eqref{C6} with $\vu_s = 0$ we observe
that the stability result stated in Theorem \ref{TC1} holds if 
either $\Grad \vtB$ or $\Grad G$ is small in the $L^\infty-$norm. 
Indeed, for
\[
A = \| \vu  \|_{L^2(\Omega; R^d)},\ 
B = \| \mathcal{T} - \mathcal{T}_s \|_{L^2(\Omega)}, 
\] 
noting as $C_p$ the constant in the Poincar\'e inequality written in the form
\[
  C_p \| v \|_{L^2(\Omega)} \le \| \nabla v \|_{L^2(\Omega;R^d)} 
\]
for $v\in W^{1,2}_0(\Omega)$, 
the problem reduces to finding two positive constants $a,b$ such that the quadratic form 
\[
a \mu C_p^2 A^2 - a |\Grad G|_{L^\infty} AB + 
b \kappa C_p^2 B^2 - b |\Grad \vtB|_{L^\infty} AB,  
\]
is strictly positively definite, meaning 
\[
 \left( Z |\Grad G|_{L^\infty} + \frac{1}{Z} |\Grad \vtB|_{L^\infty}  \right) 
  < 2 C_p^2 \sqrt{\mu \kappa} ,\ Z = \left( \frac{a}{b} \right)^{\frac{1}{2}}. 
\]
We have thus shown the following result for the Rayleigh--B\' enard problem with temperature gradient aligned to gravitation. 

\begin{mdframed}[style=MyFrame]
	
	\begin{Theorem}[{\bf Stability of equilibria - aligned temperature gradient}] \label{TBB1}
		
In addition to the hypotheses of Theorem \ref{TC1} suppose 
\[
\Grad \vtB \times \Grad G = 0, 
\]
and 
\begin{equation} \label{RB}
\| \Grad G \|_{L^\infty(\Omega; R^d)} \| \Grad \vtB \|_{L^\infty(\Omega; R^d)}
\leq C_p^2 \mu \kappa,
\end{equation}
where $C_p$ is the constant in Poincar\' e inequality.

Then there are absolute constants $C$, $K > 0$ such that 
\[ 
\| \vu(t, \cdot)  \|_{L^2(\Omega; R^d)} + 
\| \Theta(t, \cdot) - \Theta_s \|_{L^2(\Omega)} \leq C 	\Big[ \| \vu(0, \cdot)   \|_{L^2(\Omega; R^d)} + 
\| \Theta(0, \cdot) - \Theta_s \|_{L^2(\Omega)} \Big] \exp (-K t) 
\]
for any weak solution $(\vu, \Theta)$ of the OB system.

	\end{Theorem}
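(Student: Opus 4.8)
The plan is to exploit the explicit form of the equilibrium available in the aligned regime and to close a Gronwall estimate through the relative energy inequalities \eqref{C5}, \eqref{C6}. First I would verify that the hypothesis $\Grad\vtB\times\Grad G=0$ makes the \emph{trivial} pair $\vu_s=0$, $\mathcal{T}_s=0$ (equivalently $\Theta_s=\vtB$ up to an additive constant) an admissible stationary solution of \eqref{C4}. The only balance requiring attention is the momentum equation, which for $\vu_s=\mathcal{T}_s=0$ reduces to $\Grad\Pi=-\vtB\Grad G$; this is solvable precisely because $\vtB\Grad G$ is curl--free, as $\Curl(\vtB\Grad G)=\Grad\vtB\times\Grad G+\vtB\,\Curl\Grad G=0$. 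Crucially, this equilibrium is \emph{exactly} zero, so---unlike the general situation of \eqref{C7}---no smallness of $\|\Grad\vu_s\|_{L^\infty}$ needs to be invoked, which is what ultimately relaxes the hypothesis to the product condition \eqref{RB}.

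Next I would insert $(\tvu,\wmT)=(\vu_s,\mathcal{T}_s)=(0,0)$ into \eqref{C5}, \eqref{C6}. The term $\intO{\Grad\vu_s:\big((\vu-\vu_s)\otimes(\vu-\vu_s)\big)}$ drops out because $\Grad\vu_s=0$, and the two balances \emph{decouple} their forcing terms, becoming
\begin{align}
\frac{\D}{\dt}\intO{\tfrac12|\vu|^2}+\mu\intO{|\Grad\vu|^2}&\leq-\intO{\mathcal{T}\Grad G\cdot\vu},\br
\frac{\D}{\dt}\intO{\tfrac12\Lambda(\mathcal{T})\mathcal{T}}+\kappa\intO{|\Grad\mathcal{T}|^2}&=-\intO{\mathcal{T}\Grad\vtB\cdot\vu}.
\nonumber
\end{align}
Here the velocity balance is driven only by $\Grad G$ and the temperature balance only by $\Grad\vtB$, which is the structural gain of the aligned case.

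I would then form the weighted functional $\mathcal{L}=\tfrac{a}{2}\intO{|\vu|^2}+\tfrac{b}{2}\intO{\Lambda(\mathcal{T})\mathcal{T}}$ with $a,b>0$, add $a$ times the first inequality to $b$ times the second, estimate the right--hand sides by $\|\Grad G\|_{L^\infty}AB$ and $\|\Grad\vtB\|_{L^\infty}AB$ with $A=\|\vu\|_{L^2}$, $B=\|\mathcal{T}\|_{L^2}$, and bound the dissipation from below via the Poincar\'e inequality $C_p^2\|v\|_{L^2}^2\leq\|\Grad v\|_{L^2}^2$ applied to $\vu,\mathcal{T}\in W^{1,2}_0(\Omega)$. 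This yields $\tfrac{\D}{\dt}\mathcal{L}+Q(A,B)\leq0$ with
\[
Q(A,B)=a\mu C_p^2A^2-\big(a\|\Grad G\|_{L^\infty}+b\|\Grad\vtB\|_{L^\infty}\big)AB+b\kappa C_p^2B^2.
\]
Positive--definiteness of $Q$ is a discriminant inequality; writing $Z=(a/b)^{1/2}$ it reads $Z\|\Grad G\|_{L^\infty}+\tfrac1Z\|\Grad\vtB\|_{L^\infty}<2C_p^2\sqrt{\mu\kappa}$, and minimizing the left side over $Z$ shows it can be enforced exactly under the product condition \eqref{RB}.

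Finally, since $\Lambda$ is strictly positive, bounded and self--adjoint, $\intO{\Lambda(\mathcal{T})\mathcal{T}}$ is equivalent to $\|\mathcal{T}\|_{L^2}^2$, so $\mathcal{L}\simeq A^2+B^2$; together with the coercivity of $Q$ this gives $\tfrac{\D}{\dt}\mathcal{L}+2K\mathcal{L}\leq0$ for some $K>0$, hence $\mathcal{L}(t)\leq\mathcal{L}(0)e^{-2Kt}$. The identity $\Theta-\Theta_s=\Lambda(\mathcal{T}-\mathcal{T}_s)$ and the boundedness of $\Lambda$ and $\Lambda^{-1}$ then transfer this to the asserted estimate for $\|\vu(t,\cdot)\|_{L^2}+\|\Theta(t,\cdot)-\Theta_s\|_{L^2}$. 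The step I expect to be most delicate is the first one: rigorously confirming that $\vtB\Grad G$ is \emph{globally} a gradient (curl--free on the presumably simply connected $\Omega$), so that $(\vu_s,\mathcal{T}_s)=(0,0)$ is genuinely an equilibrium, and checking that the weak solution carries enough regularity (via Section \ref{BSS} and Proposition \ref{wP2}) for the relative energy inequalities \eqref{C5}, \eqref{C6} to hold rather than merely formally. The subsequent discriminant optimization and the Gronwall argument are routine once these inequalities are secured.
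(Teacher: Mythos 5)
Your proposal is correct and follows essentially the same route as the paper: take the trivial equilibrium $\vu_s = 0$, $\mathcal{T}_s = 0$ in the relative energy inequalities \eqref{C5}, \eqref{C6}, bound the dissipation from below by the Poincar\'e inequality, and enforce strict positive definiteness of the weighted quadratic form in $A = \|\vu\|_{L^2}$, $B = \|\mathcal{T}\|_{L^2}$ via the discriminant condition in $Z = (a/b)^{1/2}$, optimized to yield the product condition \eqref{RB} --- exactly the argument in the paper's discussion preceding Theorem \ref{TBB1}. The extra details you supply (solvability of the pressure from the curl-free field $\vtB \Grad G$, the equivalence $\intO{\Lambda(\mathcal{T})\mathcal{T}} \simeq \|\mathcal{T}\|^2_{L^2(\Omega)}$, and the identity $\Theta - \Theta_s = \Lambda(\mathcal{T} - \mathcal{T}_s)$ used to return to the stated norm) simply make explicit steps the paper leaves implicit.
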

	
\end{mdframed}

Obviously, condition \ref{RB} corresponds to smallness of the Rayleigh number necessary for equilibrium stability.

\def\cprime{$'$} \def\ocirc#1{\ifmmode\setbox0=\hbox{$#1$}\dimen0=\ht0
	\advance\dimen0 by1pt\rlap{\hbox to\wd0{\hss\raise\dimen0
			\hbox{\hskip.2em$\scriptscriptstyle\circ$}\hss}}#1\else {\accent"17 #1}\fi}


\begin{thebibliography}{10}
	
	\bibitem{AbbBulKap}
	A.~Abbatiello, M.~Bul{\' \i}{\v c}ek, and P.~Kaplick{\' y}.
	\newblock On the exponential decay in time of solutions to ageneralized
	{N}avier–{S}tokes–{F}ourier system.
	\newblock {\em J. Differential Equations}, {\bf 379}:762--793, 2024.
	
	\bibitem{AbbFei22}
	A.~Abbatiello and E.~Feireisl.
	\newblock The {O}berbeck-{B}oussinesq system with non-local boundary
	conditions.
	\newblock {\em Quart. Appl. Math.}, {\bf 81}(2):297--306, 2023.
	
	\bibitem{BelFeiOsch}
	P.~Bella, E.~Feireisl, and F.~Oschmann.
	\newblock Rigorous derivation of the {O}berbeck-{B}oussinesq approximation
	revealing unexpected term.
	\newblock {\em Comm. Math. Phys.}, {\bf 403}(3):1245--1273, 2023.
	
	\bibitem{BirSva}
	B.~Birnir and N.~Svanstedt.
	\newblock Existence theory and strong attractors for the
	{R}ayleigh-{B}\'{e}nard problem with a large aspect ratio.
	\newblock {\em Discrete Contin. Dyn. Syst.}, {\bf 10}(1-2):53--74, 2004.
	
	\bibitem{BoPeAh}
	E.~Bodenschatz, W.~Pesch, and G.~Ahlers.
	\newblock Recent developments in {R}ayleigh-{B}\'{e}nard convection.
	\newblock In {\em Annual review of fluid mechanics, {V}ol. 32}, volume~{\bf 32}
	of {\em Annu. Rev. Fluid Mech.}, pages 709--778. Annual Reviews, Palo Alto,
	CA, 2000.
	
	\bibitem{CaRoTe}
	M.~Cabral, R.~Rosa, and R.~Temam.
	\newblock Existence and dimension of the attractor for the {B}\'{e}nard problem
	on channel-like domains.
	\newblock volume~{\bf 10}, pages 89--116. 2004.
	\newblock Partial differential equations and applications.
	
	\bibitem{DEHIEPR}
	R.~Denk, M.~Hieber, and J.~Pr{\" u}ss.
	\newblock Optimal ${L}^p-{L}^q$-estimates for parabolic boundary value problems
	with inhomogenous data.
	\newblock {\em Math. Z.}, {\bf 257}:193--224, 2007.
	
	\bibitem{EckShi}
	R.~E. Ecke and O.~Shishkina.
	\newblock Turbulent rotating {R}ayleigh–{B}\'{e}nard convection.
	\newblock In {\em Annual review of fluid mechanics, {V}ol. 32}, volume~{\bf 55}
	of {\em Annu. Rev. Fluid Mech.}, pages 603--638. Annual Reviews, Palo Alto,
	CA, 2023.
	
	\bibitem{FanFei2023}
	F.~Fanelli and E.~Feireisl.
	\newblock Thermally driven fluid convection in the incompressible limit regime.
	\newblock 2023.
	\newblock arxiv preprint No. 2310.03881.
	
	\bibitem{FanFeiHof}
	F.~Fanelli, E.~Feireisl, and M.~Hofmanov\'{a}.
	\newblock Ergodic theory for energetically open compressible fluid flows.
	\newblock {\em Phys. D}, {\bf 423}:Paper No. 132914, 25, 2021.
	
	\bibitem{FeNo6A}
	E.~Feireisl and A.~Novotn\'y.
	\newblock {\em Singular limits in thermodynamics of viscous fluids}.
	\newblock Advances in Mathematical Fluid Mechanics. Birkh\"auser/Springer,
	Cham, 2017.
	\newblock Second edition.
	
	\bibitem{FoMaTe}
	C.~Foias, O.~Manley, and R.~Temam.
	\newblock Attractors for the {B}\'{e}nard problem: existence and physical
	bounds on their fractal dimension.
	\newblock {\em Nonlinear Anal.}, {\bf 11}(8):939--967, 1987.
	
	\bibitem{MANE}
	J.~M{\' a}lek and J.~Ne{\v c}as.
	\newblock A finite-dimensional attractor for the three dimensional flow of
	incompressible fluid.
	\newblock {\em J. Differential. Equations}, {\bf 127}:498--518, 1996.
	
	\bibitem{MAPR}
	J.~M{\' a}lek and D.~Pra{\v z}{\' a}k.
	\newblock Large time behavior via the method of l-trajectories.
	\newblock {\em J. Differential. Equations}, {\bf 181}:243--279, 2002.
	
	\bibitem{SEL}
	G.~R. Sell.
	\newblock Global attractors for the three-dimensional {N}avier-{S}tokes
	equations.
	\newblock {\em J. Dynamics Differential Equations}, {\bf 8}(1):1--33, 1996.
	
	\bibitem{WangX}
	X.~Wang.
	\newblock A note on long time behavior of solutions to the {B}oussinesq system
	at large {P}randtl number.
	\newblock In {\em Nonlinear partial differential equations and related
		analysis}, volume~{\bf 371} of {\em Contemp. Math.}, pages 315--323. Amer.
	Math. Soc., Providence, RI, 2005.
	
\end{thebibliography}
\end{document}